\newtheorem{tw}{Theorem}[section]
\newtheorem{lm}[tw]{Lemma}
\newtheorem{prop}[tw]{Proposition}
\theoremstyle{definition}
\newtheorem{df}[tw]{Definition}
\numberwithin{equation}{section}
\newcommand{\mb}{\mathbb}
\newcommand{\mc}{\mathcal}
\newcommand{\mf}{\mathfrak}
\newcommand{\s}{\subset}
\begin{document}

\baselineskip=17pt

\title[the relative extremal function]{A remark on the relative extremal function}

\author{Arkadiusz Lewandowski}
\thanks{Project operated within the Foundation for Polish Science
IPP Programme ``Geometry and Topology in Physical Models''
co-financed by the EU European Regional Development Fund,
Operational Program Innovative Economy 2007-2013.}
\address{Institute of Mathematics\\ Faculty of Mathematics and Computer Science\\ Jagiellonian University\\ {\L}ojasiewicza 6,
30-348 Kraków, Poland}
\email{arkadiuslewandowski@gmail.com, Arkadiusz.Lewandowski@im.uj.edu.pl}

\date{}

\begin{abstract}
The main result of this paper is some ``annulus'' formula for the relative extremal function in the context of Stein spaces (Theorem \ref{pierscien}). Our result may be useful in the theory of the extension of separately holomorphic functions on generalized $(N,k)$-crosses lying in the product of Stein manifolds (Theorem \ref{main}).
\end{abstract}

\subjclass[2010]{Primary 32U15; Secondary 32C15}

\keywords{relative extremal function, generalized $(N,k)$-cross, Stein manifold, Stein space}

\maketitle

\section{Introduction}
In \cite{J3} Jarnicki and Pflug proved a Hartogs type extension theorem for $(N,k)$-crosses lying in the product of Riemann domains of holomorphy over $\mb{C}^n$, which is a generalization of the classical cross theorem (see, for example \cite{A1}). The key role in their proof is played by some ``annulus'' formula for the relative extremal function. The aim of the present paper is to extend that formula to the situation, where instead of the Riemann domains of holomorphy over $\mb{C}^n$ we consider Stein spaces. Namely, we shall prove the following (for the necessary definitions see Section \ref{sec2}).
\begin{tw}
Let $D\s\s X,$ where for the couple $(D,X)$ at least one of the following two conditions is satisfied:
\begin{enumerate}[\upshape (A)]
\item $D$ is an irreducible, locally irreducible weakly parabolic Stein space with some potential $g$ and $X$ is a Stein space, 
\item $D$ is a Stein manifold and $X$ is a Josefson manifold. 
\end{enumerate}
Let $A\s D$ be nonpluripolar. Define
$$\Delta(r):=\{z\in D:h_{A,D}^{\star}(z)<r\}, \quad r\in(0,1].$$
Then for $0<r<s\leq 1$ we have
$$
h^{\star}_{\Delta(r),\Delta(s)}=\max \Big\{0,\frac{h^{\star}_{A,D}-r}{s-r}\Big\} \quad\text{ on $\Delta(s)$}.
$$ 
\label{pierscien}
\end{tw}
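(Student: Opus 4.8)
Write $u:=h^{\star}_{A,D}$ and $v:=h^{\star}_{\Delta(r),\Delta(s)}$; recall that $u\in\operatorname{PSH}(D)$, $0\le u\le1$, and $u=0$ quasi-everywhere on the nonpluripolar set $A$, so that (up to a pluripolar set) $A\s\Delta(r)\s\Delta(s)$. Put $\phi:=\max\{0,(u-r)/(s-r)\}$. The plan is to prove the two inequalities between $v$ and $\phi$ separately, the first being soft and the second carrying all the weight. For $v\ge\phi$ I would simply check that $\phi$ is an admissible competitor in the definition of $h_{\Delta(r),\Delta(s)}$: it is plurisubharmonic on $\Delta(s)$ as a maximum of two such functions, it satisfies $\phi\le1$ on $\Delta(s)$ because there $u<s$, and $\phi\le0$ on $\Delta(r)$ because there $u<r$; hence $\phi\le h_{\Delta(r),\Delta(s)}\le v$ at once.

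For the reverse inequality I would recast the goal, via the identity $r+(s-r)\phi=\max\{u,r\}$, as the estimate $P\le Q$ on $\Delta(s)$, where $P:=r+(s-r)v$ and $Q:=\max\{u,r\}$ are both plurisubharmonic on $\Delta(s)$. On the open set $\Delta(r)$ one has $v=0$ and $u<r$, so $P=r=Q$ there. On the open ``annulus'' $\Omega:=\Delta(s)\setminus\overline{\Delta(r)}$ I would argue as follows. Since $\Omega\s D\setminus\overline A$, the function $u$ is maximal on $\Omega$; on the other hand $P$ is plurisubharmonic on $\Omega$ and I claim $\limsup_{\Omega\ni z\to\zeta}P(z)\le u(\zeta)$ for quasi-every $\zeta\in\partial\Omega$. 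Indeed, on $\partial\Delta(s)$ one has $u(\zeta)\ge s$ while $P\le r+(s-r)=s$ because $v\le1$; and on $\partial\Delta(r)$ one has $u(\zeta)\ge r$ while $\limsup P\le r$ because $v\to0$ at every regular boundary point of the open set $\Delta(r)$, the irregular points forming a pluripolar subset of $\partial\Delta(r)$. The domination principle then yields $P\le u\le Q$ on $\Omega$. Consequently $P\le Q$ holds on $\Delta(r)\cup\Omega$, that is, off the pluripolar set of irregular points of $\partial\Delta(r)$; since two plurisubharmonic functions satisfying $P\le Q$ almost everywhere satisfy it everywhere (compare their decreasing solid averages at each point), we conclude $P\le Q$, i.e.\ $v\le\phi$, throughout $\Delta(s)$.

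The whole difficulty is thus concentrated in the three pluripotential-theoretic inputs used on $\Omega$: the maximality of $h^{\star}_{A,D}$ off $\overline A$, the quasi-everywhere regularity of the relative extremal function of $\Delta(r)$ on $\partial\Delta(r)$, and the domination principle; all three must be secured over a Stein space $D$ that may carry singularities. This is precisely the role of hypotheses (A) and (B). Under (A) I would use the potential $g$ of the weakly parabolic space to produce a bounded exhaustion of $D$ guaranteeing $h^{\star}_{A,D}\to1$ towards $\partial D$ (hence $\Delta(s)\Subset D$ for $s<1$) and to run the comparison principle, transporting the Monge--Amp\`ere machinery across the singular locus by means of local embeddings and the local irreducibility of $D$; under (B) the Stein/Josefson structure plays the same part. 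I would package these as a preliminary lemma valid in each of the two settings, after which the computation above applies verbatim. Finally, the borderline case $s=1$, in which $\Delta(1)$ need not be relatively compact in $D$, I would obtain by applying the result to $s'<1$ and letting $s'\uparrow1$, using the monotone convergence of the corresponding extremal functions.
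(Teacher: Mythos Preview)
Your easy inequality $\phi\le v$ is fine, and the paper does not dispute it. The gap is in the reverse inequality. Two of the three ``pluripotential inputs'' you list are not available in the generality you need, and you do not prove them---you only promise to ``package them as a preliminary lemma.'' Concretely:

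\begin{itemize}
\item The inclusion $\Omega\subset D\setminus\overline A$ is false in general: take $A$ to be a nonpluripolar set together with a countable dense subset of $D$; then $\overline A=D$ while $\Omega$ is nonempty. What you really need is that $u=h^{\star}_{A,D}$ is maximal on $\{u>0\}$. For compact $A$ in a hyperconvex $D$ this is known (and is what the paper uses), but for an arbitrary nonpluripolar $A$ in a possibly singular Stein space it is not part of the toolkit the paper sets up, and you give no argument.
\item The domination principle you invoke requires the boundary inequality \emph{everywhere} on $\partial\Omega$ (this is Theorem~2.17 in the paper), not merely outside a pluripolar set. Your argument on $\partial\Delta(r)$ hinges on the claim that $v=h^{\star}_{\Delta(r),\Delta(s)}$ vanishes at quasi-every point of $\partial\Delta(r)$ and that the exceptional set can be ignored. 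Neither assertion is justified, and on a singular space the ``compare solid averages'' remark does not apply as stated. A comparison principle allowing pluripolar boundary exceptions on Stein spaces would itself be a theorem of comparable depth to the one you are trying to prove.
\end{itemize}

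The paper follows a different route precisely to avoid these issues. It first reduces to $s=1$ and to $D$ a sublevel set of a strictly psh exhaustion (so $\Delta(r)\Subset D$), then reduces to \emph{compact} $A$ (holomorphically convex under (A)) by inside/outside approximation of both $A$ and $D$. The crucial further step is to approximate such $A$ from above by compacta $A_j$ with \emph{continuous} $h^{\star}_{A_j,D}$---in case (B) via finite unions of closed balls (Klimek's regularity result), in case (A) via Zeriahi's $L$-regular holomorphically convex compacts. Once $h^{\star}_{A,D}$ is continuous, the set $\Delta[r]$ is compact, $(dd^ch^{\star}_{A,D})^n=0$ on $D\setminus A$, and the comparison (Corollary~2.18) applies with clean boundary values and no exceptional set. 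Your direct annulus argument would need a substantially stronger comparison theorem than the paper has; the paper's approximation-to-continuity strategy is exactly what replaces it.
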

Note that the class of Josefson manifolds (i.e. those complex manifolds, for which any locally pluripolar set is globally pluripolar) is essentialy wider than the class of Stein manifolds (see Theorem 5.3 in \cite{B1}).
The above result will also allow us (see Section \ref{sec5}) to prove the formula for the relatively extremal function of the envelope of $(N,k-1)$-cross with
respect to the envelope of $(N,k)$-cross (Theorem \ref{Obwiednia w obwiedni}; cf. \cite{J3}). Finally we use our main result to give a new Hartogs type extension theorem for the generalized $(N,k)$-crosses (introduced in \cite{L1}) in the context of Stein manifolds. In the author's intention the present paper is a step towards the extenstion of separately holomorphic functions on the generalized $(N,k)$-crosses in the context of arbitrary complex manifolds, or even complex spaces.\\
\indent The paper was written during the author's stay at the Carl von Ossietzky Universit\"{a}t Oldenburg. The author would like to express his gratitude to Professor Peter Pflug for his constant help and inspiring discussions.   
\section{Prerequisites}
\label{sec2}
This section contains some definitions and results which will be needed in the sequel.\\ 
\indent We assume that any considered here complex space $X$ is reduced, has a~countable basis of topology and is of pure dimension. If $X$ is a complex space, then any $x\in X$ possesses an open neighborhood $U$ and a biholomorphic mapping $\varphi$ from $U$ to some subvariety $B$ of a domain $V\s\mb{C}^n.$ The 4-tuple $(U,\varphi,B,V)$ will be called a \emph{chart of $X$}. Also, we will use the notation Reg$X$ for the set of all regular points of $X$ and Sing$X$ for the set of all singular points of $X$ (see \cite{Lo}, Chapter V). In the present paper $\mc{PLP}(X)$ stands for the family of all (locally) pluripolar subsets of $X$ and $\mc{O}(X)$ is the space of all holomorphic functions on $X$. Finally, we assume throughout the paper that any appearing complex manifold is countable at infinity.
\begin{df}\rm
Let $X$ be a complex space. A function $u:X\rightarrow[-\infty,\infty)$,\ $u \not\equiv-\infty$~on~irreducible componnents of $X$, is called \emph{plurisubharmonic} (written $u\in\mc{PSH}(X)$) if for any $x\in X$ there are a chart $(U,\varphi,B,V)$ with $x\in U$ and a function $\psi\in\mc{PSH}(V)$ with $\psi\circ \varphi=u|_U.$  
\end{df}
The following result plays a central role in the theory of plurisubharmonic functions on complex spaces.
\begin{tw}[\cite{FN1}]
An upper semicontinuous function $u:X\rightarrow[-\infty,\infty)$ is plurisubharmonic on $X$ iff for any function $f\in\mc{O}(\mb{D},X),$ the function $u\circ f$ is subharmonic on $\mb{D}.$ Here $\mb{D}$ means the unit disc in the complex plane.  \label{disc}
\end{tw}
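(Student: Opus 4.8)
The plan is to prove the two implications separately; the easy direction is local on the disc, while the content of the hard direction is concentrated at the singular locus of a local model.

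\medskip
\noindent\emph{Necessity.} Assume $u\in\mc{PSH}(X)$ and fix $f\in\mc{O}(\mb{D},X)$. Since subharmonicity is a local property on $\mb{D}$, fix $\zeta_0\in\mb{D}$, set $x=f(\zeta_0)$, and pick a chart $(U,\varphi,B,V)$ with $x\in U$ and $\psi\in\mc{PSH}(V)$ such that $u|_U=\psi\circ\varphi$. On a small disc about $\zeta_0$ the map $f$ stays in $U$, so $\varphi\circ f$ is a holomorphic map into the domain $V\s\mb{C}^n$ and $u\circ f=\psi\circ(\varphi\circ f)$ there; as the composition of a plurisubharmonic function with a holomorphic map from $\mb{D}$ is subharmonic, $u\circ f$ is subharmonic near $\zeta_0$, hence on all of $\mb{D}$.

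\medskip
\noindent\emph{Sufficiency.} The conclusion is local, so fix $x_0$, a chart $(U,\varphi,B,V)$ about $x_0$, and identify $U$ with $B\s V\s\mb{C}^n$; thus $u$ becomes a usc function on $B$ with $u\circ f$ subharmonic for every $f\in\mc{O}(\mb{D},B)$, and we must find $\psi\in\mc{PSH}(V)$ (after shrinking $V$) with $\psi|_B=u$. On $\mathrm{Reg}\,B$, written in local coordinates as an open subset of $\mb{C}^m$, the affine discs $\lambda\mapsto a+\lambda b$ are admissible maps $f$, so the hypothesis says exactly that $u$ is subharmonic along every complex line slice; hence $u|_{\mathrm{Reg}\,B}$ is plurisubharmonic, and it is locally bounded above since $u$ is usc. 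The main analytic input is now the local extension theorem for plurisubharmonic functions on analytic sets: after shrinking $V$ around $x_0$, there is $\psi\in\mc{PSH}(V)$ with $\psi=u$ on $\mathrm{Reg}\,B$. Writing $v:=\psi|_B$, we have that $v$ is plurisubharmonic on $B$, $v=u$ on $\mathrm{Reg}\,B$, and $v\leq u$ on $\mathrm{Sing}\,B$ by upper semicontinuity of $u$.

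\medskip
It remains to show $u=v$ on $\mathrm{Sing}\,B$, and this is where discs through singular points enter. Fix $a\in\mathrm{Sing}\,B$; slicing $B$ by a generic affine subspace through $a$ yields a one-dimensional branch meeting $\mathrm{Sing}\,B$ only at $a$, and its normalization provides $f\in\mc{O}(\mb{D},B)$ with $f(0)=a$ and $f(\mb{D}\setminus\{0\})\s\mathrm{Reg}\,B$. Both $u\circ f$ (by hypothesis) and $v\circ f$ (by the Necessity applied to $v$) are subharmonic on $\mb{D}$, and they coincide on $\mb{D}\setminus\{0\}$ because $u=v$ on $\mathrm{Reg}\,B$. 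For a subharmonic $w$ the circular means $r\mapsto\frac{1}{2\pi}\int_0^{2\pi}w(re^{i\theta})\,d\theta$ increase as $r$ grows and tend to $w(0)$ as $r\to0^+$; since these means agree for $u\circ f$ and $v\circ f$ at every small $r>0$ (the circles avoid $0$), letting $r\to0^+$ gives $u(a)=(u\circ f)(0)=(v\circ f)(0)=v(a)$. Thus $u=v=\psi|_B$ on all of $B$, which is precisely the required representation, so $u\in\mc{PSH}(X)$.

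\medskip
The step I expect to be the genuine obstacle is the extension theorem invoked above: producing an ambient plurisubharmonic $\psi$ on $V\s\mb{C}^n$ from the datum on $\mathrm{Reg}\,B$ rests on the local structure of $B$ (finite branched projections together with a careful regularization) and is the one place where soft, one-variable arguments do not suffice; the treatment of $\mathrm{Reg}\,B$ and the matching of values on $\mathrm{Sing}\,B$, by contrast, reduce to elementary facts about subharmonic functions of one variable.
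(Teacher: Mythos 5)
First, note that the paper contains no proof of this statement at all: it is quoted from Fornaess--Narasimhan \cite{FN1} and used as a black box, so your proposal can only be judged on its own merits. Your necessity direction is fine (modulo the standard convention that $u\circ f\equiv-\infty$ must be admitted as subharmonic, since a disc may sit inside $\{u=-\infty\}$), and your circular-means matching at singular points is a correct argument. The fatal problem is the step you yourself flag as the crux: the ``local extension theorem'' asserting that if $u$ is usc on $B$ and plurisubharmonic on $\mathrm{Reg}\,B$, then after shrinking $V$ there is $\psi\in\mc{PSH}(V)$ with $\psi=u$ on $\mathrm{Reg}\,B$. This is false in the generality you need. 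Take $B=\{zw=0\}\s V\s\mb{C}^2$, with $u=-1$ on $\{w=0\}\setminus\{0\}$, $u=0$ on $\{z=0\}\setminus\{0\}$, and $u(0)=0$: this $u$ is usc on $B$, bounded, and locally constant (hence pluriharmonic) on $\mathrm{Reg}\,B$, yet any $\psi\in\mc{PSH}(V)$ agreeing with $u$ on $\mathrm{Reg}\,B$ would have to satisfy $\psi(0)\leq-1$ (sub-mean value of the subharmonic function $\lambda\mapsto\psi(\lambda,0)$) and simultaneously $\psi(0)\geq 0$ (upper semicontinuity along $\{z=0\}$). Since reducible germs occur on irreducible spaces (a nodal curve), you cannot shrink or localize this away.

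The gap is structural, not cosmetic. Your extension step consumes only the hypotheses ``usc on $B$, psh on $\mathrm{Reg}\,B$, locally bounded above,'' while all use of discs through singular points is deferred to the later matching of values on $\mathrm{Sing}\,B$; but the counterexample above satisfies exactly those partial hypotheses (its disc condition fails precisely on a disc through the singular point), so any correct construction of the ambient $\psi$ must already exploit the disc condition at $\mathrm{Sing}\,B$. Building $\psi$ from that condition is the actual hard content of the Fornaess--Narasimhan theorem; the genuine extension theorems in this vicinity (Grauert--Remmert type continuation of psh functions from $\mathrm{Reg}\,X$) require normality of the space, which a local model $B$ of a general reduced complex space need not have. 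A smaller instance of the same circularity: your claim that $v\leq u$ on $\mathrm{Sing}\,B$ ``by upper semicontinuity of $u$'' already needs the sub-mean inequality for $v$ along discs inside $B$, i.e., the argument of your final paragraph, not usc alone. So the regular-part analysis and the endgame are sound, but the proposal as written assumes, in the guise of a citable lemma, essentially the theorem it sets out to prove.
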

Note that the above result immediately implies the following ``basic'' properties of plurisubharmonic functions.
\begin{prop}[cf. \cite{Sm}]
Let $X$ be a complex space.
\begin{enumerate}[\upshape (a)]
\item Let $(u_n)_{n\in\mb{N}}\s\mc{PSH}(X)$. If $u:=\sup\{u_n\}$ is upper semicontinuous and $u<\infty,$ then it is also plurisubharmonic.
\item Let $(u_n)_{n\in\mb{N}}\s\mc{PSH}(X)$. If $(u_n)_{n\in\mb{N}}$ is decreasing and $u:=\inf\{u_n\}$ is not identically $-\infty$ on any irreducible component of $X,$ then it is also plurisubharmonic.
\item Let $(u_n)_{n\in\mb{N}}\s\mc{PSH}(X)$. If $(u_n)_{n\in\mb{N}}$ converges uniformly, then its limit is also plurisubharmonic.
\item Let $Y\s X$ be open. Let $v\in\mc{PSH}(Y),u\in\mc{PSH}(X)$ and such that
$$
\limsup\limits_{Y\ni x\rightarrow x_0}v(x)\leq u(x_0),\quad x_0\in\partial Y.
$$
Put
\begin{displaymath}
\tilde{u}(x):=\begin{cases}
\max\{v(x),u(x)\},& x\in Y,\\
u(x),&x\in X\setminus Y.
\end{cases}
\end{displaymath}
Then $\tilde{u}$ is plurisubharmonic on $X.$
\end{enumerate}
\label{glue}
\end{prop}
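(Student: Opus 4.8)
The unifying idea of the plan is that Theorem \ref{disc} lets me test plurisubharmonicity one holomorphic disc at a time: in each of the four cases it suffices to verify that the candidate function is upper semicontinuous, is not identically $-\infty$ on any irreducible component, and that its composition with an arbitrary $f\in\mc{O}(\mb{D},X)$ is subharmonic on $\mb{D}$. Since composition with the fixed holomorphic map $f$ converts each hypothesis into the corresponding statement about a family of subharmonic functions on the disc, every item reduces to a classical one-variable fact. I would therefore set up this reduction once and then invoke the appropriate potential-theoretic theorem in each case.

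For (a): $u=\sup_n u_n$ is upper semicontinuous and $<\infty$ by hypothesis, hence locally bounded above; each $u_n\circ f$ is subharmonic by Theorem \ref{disc}, so $u\circ f=\sup_n(u_n\circ f)$ is an upper semicontinuous, locally bounded above upper envelope of subharmonic functions on $\mb{D}$, which is subharmonic. For (b): a decreasing limit of upper semicontinuous functions is upper semicontinuous, so $u=\inf_n u_n$ is upper semicontinuous, and $u\circ f=\inf_n(u_n\circ f)$ is a decreasing limit of subharmonic functions, hence subharmonic (or $\equiv-\infty$, which is admissible), while the global nondegeneracy is exactly the hypothesis $u\not\equiv-\infty$ on components. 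For (c): uniform convergence is preserved under composition with $f$, so $u\circ f=\lim_n u_n\circ f$ uniformly, and a uniform limit of subharmonic functions is subharmonic, just as a uniform limit of upper semicontinuous functions is upper semicontinuous. In each case Theorem \ref{disc} then yields plurisubharmonicity.

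For (d) I would first check that $\tilde{u}$ is upper semicontinuous: on the open set $Y$ it equals $\max\{v,u\}$ and on $\mathrm{int}(X\setminus Y)$ it equals $u$, both upper semicontinuous; at a boundary point $x_0\in\partial Y$ the approach from $X\setminus Y$ gives $\limsup\tilde{u}\leq u(x_0)$ by upper semicontinuity of $u$, and the approach from within $Y$ gives $\limsup\max\{v,u\}\leq\max\{u(x_0),u(x_0)\}=u(x_0)$ by the boundary hypothesis, so $\limsup_{x\to x_0}\tilde{u}(x)\leq u(x_0)=\tilde{u}(x_0)$. Since $\tilde{u}\geq u$, it is not identically $-\infty$ on any component. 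Finally, fixing $f\in\mc{O}(\mb{D},X)$ and putting $\omega:=f^{-1}(Y)$ (open in $\mb{D}$), the function $\tilde{u}\circ f$ equals $\max\{v\circ f,u\circ f\}$ on $\omega$ and $u\circ f$ off $\omega$, where $v\circ f$ is subharmonic on $\omega$ (a local application of Theorem \ref{disc} to $v$ on $Y$) and $u\circ f$ is subharmonic on $\mb{D}$. The real work is the transfer of the boundary inequality: for $t_0\in\partial\omega$ one has $f(t_0)\in\partial Y$, so the hypothesis yields $\limsup_{\omega\ni t\to t_0}(v\circ f)(t)\leq(u\circ f)(t_0)$, which is precisely what the classical gluing lemma for subharmonic functions on $\mb{D}$ requires in order to conclude that $\tilde{u}\circ f$ is subharmonic. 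I expect the boundary transfer together with the upper semicontinuity check at $\partial Y$ in (d) to be the only steps demanding genuine care; the other three parts are essentially immediate once the disc reduction is in place.
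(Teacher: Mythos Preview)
Your proposal is correct and follows precisely the route the paper indicates: the paper does not give a proof but simply remarks that Theorem~\ref{disc} ``immediately implies'' these basic properties, and your write-up is exactly the detailed execution of that reduction to classical one-variable subharmonic function theory. The only place requiring care is indeed the boundary transfer in (d), and your argument that $t_0\in\partial\omega$ forces $f(t_0)\in\partial Y$ (since $f(t_0)\in\overline{Y}\setminus Y$) handles it correctly.
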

\begin{df}[\cite{GR}, Chapter VII, Section A]\rm
Let $X$ be a complex space and let $K\s X$ be compact. The \emph{holomorphically convex hull of $K$ in $X$} is defined as
\begin{displaymath}
\hat{K}_{X}:=\{x\in X:|f(x)|\leq||f||_{K},f\in\mc{O}(X)\}.
\end{displaymath}
We say that $K$ is \emph{holomorphically convex}, if $K=\hat{K}_X.$ A complex space $X$ is called \emph{holomorphically convex}, if for any compact set $K\s X,$ the set $\hat{K}_X$ is also compact.
\end{df}
\begin{prop}[\cite{FN1}]
Let $X$ be a Stein space (see \cite{GR}, Chapter VII, Section A, Definition 2) and let $u\in\mc{PSH}(X).$ Then for any real number $c,$ the set $Y:=\{x\in X:u(x)<c\}$ is Runge in $X$ (that is, for any compact set $K\s Y$, the set $\hat{K}_X\cap Y$ is compact, see \cite{N2}), and, in particular, it is also a Stein space.
\label{Runge}
\end{prop}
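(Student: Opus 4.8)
The plan is to reduce the whole statement to one claim about holomorphically convex hulls: for every compact $K\s Y$ one has $\hat{K}_X\s Y$. Granting this, the Runge property is immediate. Indeed, since $X$ is Stein it is holomorphically convex, so $\hat{K}_X$ is compact; and $\hat{K}_X\s Y$ gives $\hat{K}_X\cap Y=\hat{K}_X$, which is therefore compact.

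So first I would fix a compact $K\s Y$. Because $u$ is upper semicontinuous and $K$ is compact, $u$ attains its supremum on $K$, and since $K\s Y=\{u<c\}$ this forces $m:=\sup_K u<c$. The goal is then to show that $u\leq m$ on $\hat{K}_X$, for this yields $\hat{K}_X\s\{u\leq m\}\s\{u<c\}=Y$, which is exactly the claim.

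The hard part is precisely this last step: passing from the \emph{holomorphic} estimate $|f(x)|\leq\|f\|_K$, valid for $x\in\hat{K}_X$ and all $f\in\mc{O}(X)$, to the \emph{plurisubharmonic} estimate $u(x)\leq m$. The one easy inclusion is obtained by noting that $\log|f|\in\mc{PSH}(X)$ for every $f\in\mc{O}(X)$ (pull back through a chart), which shows that the plurisubharmonic hull of $K$ is contained in $\hat{K}_X$. What is needed here is the reverse, namely that on a Stein space $\hat{K}_X$ is already plurisubharmonically convex, i.e. $v(x)\leq\sup_K v$ for every $v\in\mc{PSH}(X)$ and every $x\in\hat{K}_X$. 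This is the genuine input of Stein theory, and it is exactly what \cite{FN1} supplies: plurisubharmonic functions on a Stein space are sufficiently well approximated by (logarithms of moduli of) global holomorphic functions for the two hulls to coincide. Applying this with $v=u$ gives $u(x)\leq m$ on $\hat{K}_X$, completing the key claim and hence the Runge assertion.

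Finally, for the conclusion that $Y$ is itself a Stein space, I would verify the defining properties in turn. Holomorphic separation of points and the existence of local holomorphic charts on $Y$ are inherited from $X$, since $\mc{O}(X)|_Y\s\mc{O}(Y)$. It remains to see that $Y$ is holomorphically convex. For compact $K\s Y$ the inclusion $\mc{O}(X)|_Y\s\mc{O}(Y)$ gives $\hat{K}_Y\s\hat{K}_X\cap Y=\hat{K}_X$; since $\hat{K}_Y$ is relatively closed in $Y$ and is contained in the compact set $\hat{K}_X\s Y$, it is closed in $\hat{K}_X$, hence compact. Thus $Y$ is holomorphically convex, and therefore Stein.
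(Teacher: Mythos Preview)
The paper gives no proof of this proposition: it is simply quoted from \cite{FN1} as a known result. Your outline is correct and identifies the genuine content. The reduction to $\hat K_X\s Y$ via the supremum $m=\sup_K u<c$ is standard, and the nontrivial input is exactly what you isolate: that on a Stein space the holomorphically convex hull agrees with the plurisubharmonically convex hull, so every global $u\in\mc{PSH}(X)$ satisfies $u\le\sup_K u$ on $\hat K_X$. Your direct verification that $Y$ is Stein is also fine; alternatively, the parenthetical reference to \cite{N2} in the statement points to Narasimhan's theorem that a Runge open subset of a Stein space is Stein, which is how the ``in particular'' is meant to be read. One small caveat: the mechanism you sketch for the hull equality (approximation of $u$ by logarithms of moduli of global holomorphic functions) is a reasonable heuristic in $\mb{C}^n$ but is not literally how the result is established in \cite{FN1}; there it emerges from their treatment of the Levi problem on spaces with singularities. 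This does not affect the correctness of your argument.
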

\begin{tw}[\cite{N1}]
Let $X$ be a Stein space. Then there exists a real analytic, strongly plurisubharmonic exhaustion function on $X$.
\label{exh}
\end{tw}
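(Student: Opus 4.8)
The plan is to deduce the statement from the embedding theorem for Stein spaces, using the simplest possible candidate exhaustion. Since $X$ is a reduced, pure-dimensional, second countable Stein space, it has finite dimension $n$, and Remmert's embedding theorem furnishes a proper holomorphic embedding $\iota=(\iota_1,\dots,\iota_N)\colon X\to\mb{C}^N$ (one may take $N=2n+1$) onto a closed analytic subset $\iota(X)\s\mb{C}^N$. I would then propose
$$u:=\|\iota\|^2=\sum_{k=1}^N|\iota_k|^2,\qquad \iota_k\in\mc{O}(X),$$
and verify that $u$ is real analytic, strongly plurisubharmonic, and exhausting.

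First I would check real analyticity locally: in a chart $(U,\varphi,B,V)$ the holomorphic map $\iota\circ\varphi^{-1}\colon B\to\mb{C}^N$ extends to a holomorphic map $F$ on a neighborhood in $V$, so that $u|_U=\big(\|F\|^2\big)\circ\varphi$ with $\|F\|^2$ real analytic on $V$; hence $u$ is real analytic on $X$ (and, being a sum of moduli squared of holomorphic functions, plurisubharmonic). The exhaustion property is immediate from properness: for $c>0$ one has $\{u\le c\}=\iota^{-1}\big(\overline{\mb{B}(0,\sqrt c)}\big)$, compact because $\iota$ is proper and the closed ball is compact, and these sets cover $X$. For strong plurisubharmonicity I would use $\iota$ \emph{itself} as the local embedding: the function $\|z\|^2$ on $\mb{C}^N$ has complex Hessian equal to the identity, hence is a real analytic strongly plurisubharmonic function on $\mb{C}^N$, and $u$ is exactly its restriction to the subvariety $\iota(X)$. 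As strong plurisubharmonicity of a function on a complex space is independent of the chosen local embedding, $u$ is strongly plurisubharmonic on $X$.

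The main obstacle is not this verification but its two inputs. First, one must invoke the embedding theorem in the singular, merely second countable setting; this is the deep ingredient, and it is precisely here that finite dimensionality and countability at infinity are genuinely used. Second, one must reconcile the definition of (strong) plurisubharmonicity recorded here, phrased through charts into $\mb{C}^n$ with $n=\dim X$, with a restriction coming from the higher-dimensional $\mb{C}^N$; the clean way is to record that strong plurisubharmonicity is a chart-independent notion, so that testing it through $\iota$ is legitimate. If one prefers a self-contained route avoiding the embedding theorem, one can instead fix a normal exhaustion by holomorphically convex compacta $K_1\s\mathrm{int}\,K_2\s K_2\s\cdots$, use holomorphic convexity to produce, for points outside each $K_\nu$, global $f\in\mc{O}(X)$ with $|f|>\|f\|_{K_\nu}$ forcing growth, and use holomorphic separability to produce local systems of global holomorphic coordinates whose squared moduli are strongly plurisubharmonic. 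Summing these against a sufficiently fast decreasing sequence of weights and gluing by Proposition \ref{glue} yields the desired $u$; this is more laborious but stays entirely within the tools of Section \ref{sec2}.
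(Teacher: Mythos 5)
The paper offers no argument for this statement at all: it is quoted directly from Narasimhan \cite{N1}, so your proposal must be measured against that classical proof. Your primary route has a genuine gap: Remmert--Narasimhan embedding theorems do \emph{not} apply to arbitrary Stein spaces. A proper holomorphic embedding $\iota\colon X\to\mb{C}^N$ forces the local embedding dimension of $X$ to be at most $N$ at every point, whereas a reduced, irreducible, pure-dimensional Stein space may have \emph{unbounded} embedding dimension. For instance, modify the structure of $\mb{C}$ at the discrete set $\{p_k=k\}$ so that near $p_k$ the curve is modelled on the monomial curve $t\mapsto(t^k,t^{k+1},\dots,t^{2k-1})$, whose embedding dimension at the origin is $k$; the resulting curve is Stein (its normalization $\mb{C}$ is Stein and the normalization map is finite), yet it embeds into no $\mb{C}^N$ whatsoever. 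So $\|\iota\|^2$ is unavailable exactly in the singular situation the theorem addresses; moreover $N=2n+1$ is a statement about Stein \emph{manifolds} (for embeddable spaces the target dimension depends on the embedding dimension, not just on $n$), and your appeal to chart-independence of strong plurisubharmonicity, while true, is itself a nontrivial lemma proved by stabilizing and comparing local embeddings.

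Your ``self-contained'' fallback is in fact essentially Narasimhan's actual argument in \cite{N1} and should have been the main route: exhaust $X$ by holomorphically convex compacta $K_\nu$, use holomorphic convexity and separability to choose countably many $f_j\in\mc{O}(X)$ so that near every point finitely many of them realize a local embedding, and so that for each $\nu$ finitely many of them are large on $K_{\nu+1}\setminus\mathrm{int}\,K_\nu$, then set $u:=\sum_j\varepsilon_j|f_j|^2$ with rapidly decreasing weights $\varepsilon_j>0$. Two points you pass over quickly are where the work lies. First, the exhaustion property is not produced by a single function per point but by finitely many per annulus $K_{\nu+1}\setminus\mathrm{int}\,K_\nu$, chosen compatibly with convergence of the series; no gluing via Proposition \ref{glue} is needed, as the single series does everything. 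Second, real analyticity of $u$ is not automatic, since a locally uniform limit of real analytic functions need not be real analytic; one verifies it by observing that $\sum_j\varepsilon_j f_j(z)\overline{f_j(w)}$ converges locally uniformly by the Cauchy--Schwarz inequality to a function holomorphic in $(z,\bar w)$, whose restriction to the diagonal is $u$, and this argument transported into charts also supplies the local strongly plurisubharmonic real analytic extensions required by the definitions of Section \ref{sec2}. With these repairs your second sketch becomes a correct proof, matching the cited source; the first, embedding-based route cannot be repaired for general Stein spaces.
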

Note that the real analyticity on a complex space $X$ is defined in a similar way like the plurisubharmonicity. A function $f$ on $X$ is real analytic, if for any $x\in X$ there are a chart $(U,\varphi,B,V)$ with $x\in U$ and a real analytic function $g$ on $V$ with $g\circ\varphi=f|_U$ (see \cite{N1}).\\
\indent For a function $\psi$ as in Theorem \ref{exh} and for any real number $c$ denote by $\Omega_c(\psi)$ the sublevel set $\{x\in X:\psi(x)<c\}.$
\begin{df} [\cite{A1}]
Let $X$ be a complex space. Let $D\s X$ be open and let $A\s D.$ Define the \emph{relative extremal function of $A$ with respect to $D$} as the standard upper semicontinuous regularization $h^{\star}_{A,D}$ of the function 
\begin{displaymath}
h_{A,D}:=\sup\{u:u\in\mc{PSH}(D),u\leq 1,u|_A\leq 0\}.
\end{displaymath}
For an open set $Y\s X$ we put $h_{A,Y}:=h_{A\cap Y,Y},h^{\star}_{A,Y}:=h^{\star}_{A\cap Y,Y}.$
\label{extremal}
\end{df}
\begin{df}[\cite{A1}]
We say that a set $A\s X$ is \emph{pluriregular~at~a~point} $a\in\overline{A}$ if $h^{\star}_{A,U}(a)=0$ for any open neighborhood $U$ of $a.$ Define
\begin{displaymath}
A^{\star}:=\{a\in\overline{A}:A\text{ is pluriregular at $a$}\}.
\end{displaymath}
We say that $A$ is \emph{locally pluriregular} if~$A~\neq~\varnothing$ and $A$ is pluriregular at each of its points, i.e. $\varnothing\neq A\s A^{\star}.$
\label{df3}
\end{df}
\begin{tw}[\cite{AH}]
Let $X$ be an irreducible Stein space and let $D\s X$ be a domain. Then the following conditions are equivalent.
\begin{enumerate}[\upshape (a)]
\item For any set $P\in\mc{PLP}(D)$ and any $A\s D$ we have $h^{\star}_{A\cup P,D}\equiv h^{\star}_{A,D}$
\item For any set $P\in\mc{PLP}(D)$ there exists a $u\in\mc{PSH}(D),u\leq 0$ and nonconstant, such that $P\s\{u=-\infty\}.$
\end{enumerate}
\label{alehyane}
\end{tw}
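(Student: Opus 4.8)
The plan is to prove the two implications separately. The direction (b)$\Rightarrow$(a) is elementary pluripotential calculus, while (a)$\Rightarrow$(b) carries the real weight. For (b)$\Rightarrow$(a) the inequality $h^{\star}_{A\cup P,D}\leq h^{\star}_{A,D}$ is immediate, since imposing the extra constraint on $P$ only shrinks the defining family in Definition \ref{extremal}. For the reverse inequality I would fix a nonconstant $u\in\mc{PSH}(D)$ with $u\leq 0$ and $P\s\{u=-\infty\}$ supplied by (b), and take any competitor $v$ for $h_{A,D}$ (so $v\leq 1$, $v|_A\leq 0$). For every $\varepsilon>0$ the function $v+\varepsilon u$ lies in $\mc{PSH}(D)$ (it is finite on a dense set, hence $\not\equiv-\infty$ on components), is $\leq 1$, and is $\leq 0$ on $A\cup P$; thus $v+\varepsilon u\leq h^{\star}_{A\cup P,D}$. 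Letting $\varepsilon\to 0^+$ gives $v\leq h^{\star}_{A\cup P,D}$ off the pluripolar set $\{u=-\infty\}$, and since a plurisubharmonic function equals the $\limsup$ of its values taken off any pluripolar set, this inequality in fact holds on all of $D$. Taking the supremum over $v$ and then the upper regularization yields $h^{\star}_{A,D}\leq h^{\star}_{A\cup P,D}$, hence equality.

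For (a)$\Rightarrow$(b) I would first reduce to a statement about a single extremal function. Applying (a) with $A=\varnothing$ and noting that the constant $1$ is admissible in the definition of $h_{\varnothing,D}$, one gets $h^{\star}_{P,D}\equiv h^{\star}_{\varnothing,D}\equiv 1$ for every pluripolar $P\s D$. So it suffices to prove the implication: if $h^{\star}_{P,D}\equiv 1$, then $P$ is contained in the $-\infty$ locus of a single nonconstant $u\in\mc{PSH}(D)$ with $u\leq 0$.

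The construction is the heart of the matter. Fix compacts $K_1\s K_2\s\cdots$ exhausting $D$. The family $\mc{F}:=\{v\in\mc{PSH}(D):v\leq 1,\ v|_P\leq 0\}$ is nonempty (it contains $0$) and upward directed (closed under $\max$), with $(\sup\mc{F})^{\star}=h^{\star}_{P,D}\equiv 1$. Since the negligible set $\{h_{P,D}<1\}$ is pluripolar, hence of zero relative Monge--Amp\`ere capacity on each $K_j$, Choquet's lemma together with the Bedford--Taylor quasicontinuity machinery lets me select $v_j\in\mc{F}$ with $\mathrm{Cap}\big(\{v_j<1-4^{-j}\}\cap K_j\big)<2^{-j}$. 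Setting $w_j:=v_j-1\leq 0$, one has $w_j\leq -1$ on $P$ while $w_j\geq -4^{-j}$ off an exceptional set $G_j$ of capacity $<2^{-j}$. A Borel--Cantelli argument in capacity shows that $\limsup_j G_j$ is pluripolar, so $u:=\sum_j w_j$ is finite off a pluripolar set; in particular $u\not\equiv-\infty$, whence $u\in\mc{PSH}(D)$ by Proposition \ref{glue}(b), being the decreasing limit of its partial sums. By construction $u\leq 0$, $u\equiv-\infty$ on $P$, and $u$ is nonconstant, which is precisely (b). The main obstacle is exactly this step: the naive idea of gluing local $-\infty$ functions fails, since the gluing of Proposition \ref{glue}(d) against $0$ destroys the $-\infty$ values, so the hypothesis $h^{\star}_{P,D}\equiv 1$ must be used quantitatively through the capacity estimates. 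What makes those estimates available is the ambient irreducible Stein structure, with Proposition \ref{Runge} keeping the relevant sublevel sets Stein so that Bedford--Taylor theory and the negligibility theorem apply; by contrast, (b)$\Rightarrow$(a) used nothing beyond the removability of pluripolar sets for plurisubharmonic functions.
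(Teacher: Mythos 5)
You should first know a point of comparison that changes the exercise: the paper does not prove Theorem \ref{alehyane} at all --- it is imported verbatim from \cite{AH} --- so your proposal can only be measured against the standard argument of Alehyane--Hecart, and in outline you have reconstructed essentially that argument. Your (b)$\Rightarrow$(a) half is correct: the perturbation $v+\varepsilon u$, the passage $\varepsilon\to 0^{+}$ off $\{u=-\infty\}$, and the removal of the exceptional set via the fact that a plurisubharmonic function equals the $\limsup$ of its values taken off a pluripolar set are exactly the expected steps (on a singular $D$ this last fact is itself not free and deserves a citation, but it is available in the setting of \cite{AH}). Your reduction of (a)$\Rightarrow$(b) to the implication ``$h^{\star}_{P,D}\equiv 1$ forces $P\subset\{u=-\infty\}$ for a single admissible $u$'', obtained by taking $A=\varnothing$ so that $h^{\star}_{\varnothing,D}\equiv 1$, is also the right move, and the series $u=\sum_j(v_j-1)$ with $v_j\leq 0$ on $P$ is the standard construction.

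A few things deserve flagging. The selection of the $v_j$ is stated loosely: what actually delivers $\mathrm{Cap}\bigl(\{v_j<1-4^{-j}\}\cap K_j\bigr)<2^{-j}$ is Choquet's lemma applied to the upward directed family (giving an increasing sequence whose limit is $1$ quasi-everywhere, using that negligible sets are pluripolar) combined with the Bedford--Taylor theorem that an increasing, locally bounded sequence of plurisubharmonic functions converging quasi-everywhere converges in capacity; your phrase ``quasicontinuity machinery'' does cover this, but since $D$ may be singular, every ingredient here (quasicontinuity, negligibility, countable subadditivity, ``outer capacity zero if and only if pluripolar'') must be invoked in its complex-space form, i.e.\ from \cite{B1}, not merely from the $\mathbb{C}^n$ theory. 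Incidentally, the capacity bookkeeping can be avoided altogether: choosing $v_j$ with $v_j(z_0)>1-2^{-j}$ at countably many points $z_0$ where $h_{P,D}=1$ (at least one in each irreducible component, so that Proposition \ref{glue}(b) applies) already makes the series admissible. Finally, two small slips: your closing attribution of the estimates to Proposition \ref{Runge} is misplaced --- that proposition plays no role in this theorem, the capacity arguments being purely local --- and for $P=\varnothing$ your construction may return $u\equiv 0$, so nonconstancy in (b) should be secured by running the construction for a singleton $P$ instead. None of these affects the correctness of the overall plan.
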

\begin{lm}[\cite{J2}, Propostion 3.2.27, Lemma 6.1.1] Let $X$ a complex space, $A\s X$ locally pluriregular, and $\varepsilon\in(0,1).$ Put
$$
X_{\varepsilon}:=\{z\in X:h^{\star}_{A,X}(z)<1-\varepsilon\}.
$$
Then for any connected component $D$ of $X_{\varepsilon}$ we have
\begin{enumerate}[\upshape (a)]
\item $A\cap D\neq\varnothing.$
\item $h^{\star}_{A\cap D,D}(z)=\frac{h^{\star}_{A,X}(z)}{1-\varepsilon},z\in D.$
\end{enumerate}
\label{sublevel}
\end{lm}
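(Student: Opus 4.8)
The plan is to work throughout with $h:=h^{\star}_{A,X}$, which by Choquet's lemma and Proposition \ref{glue}(a) belongs to $\mc{PSH}(X)$ and satisfies $0\le h\le 1$ (the lower bound because the constant $0$ is an admissible competitor in Definition \ref{extremal}). Two preliminary facts drive everything. First, restricting admissible functions from $X$ to an open $U\s X$ shows $h|_{U}\le h^{\star}_{A,U}$; combined with the local pluriregularity of $A$ (Definition \ref{df3}) and $h\ge 0$, this forces $h\equiv 0$ on $A$. Second, since complex spaces are locally connected, the component $D$ is open and relatively closed in $X_{\varepsilon}$, whence $\partial D\s X\setminus X_{\varepsilon}=\{h\ge 1-\varepsilon\}$.

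For part (a) I would argue by contradiction. Assuming $A\cap D=\varnothing$, I glue the constant function $1-\varepsilon$ on $D$ to $h$ on $X\setminus D$. The hypothesis of Proposition \ref{glue}(d) holds because $\limsup_{D\ni z\to\zeta}(1-\varepsilon)=1-\varepsilon\le h(\zeta)$ for every $\zeta\in\partial D$, so the glued function $\tilde u$ is plurisubharmonic on $X$. One checks $\tilde u\le 1$ and, crucially using $A\cap D=\varnothing$ together with $h|_{A}=0$, that $\tilde u|_{A}\le 0$; thus $\tilde u$ is admissible and $\tilde u\le h$. But on $D$ we have $\tilde u=1-\varepsilon$ while $h<1-\varepsilon$, a contradiction.

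For part (b) I prove both inequalities on $D$. For ``$\ge$'' I observe that $h/(1-\varepsilon)$ is plurisubharmonic on $D$, is $<1$ there, and vanishes on $A\cap D$ (by $h|_{A}=0$); hence it is admissible for $h_{A\cap D,D}$ and so $h/(1-\varepsilon)\le h^{\star}_{A\cap D,D}$. For ``$\le$'' I take an arbitrary admissible $u$ for $h_{A\cap D,D}$ and glue $(1-\varepsilon)u$ on $D$ to $h$ on $X\setminus D$; the gluing condition again holds since $(1-\varepsilon)u\le 1-\varepsilon\le h$ near $\partial D$. The resulting function is admissible for $h_{A,X}$ — here I again use $h|_{A}=0$ and $u|_{A\cap D}\le 0$ — so it is $\le h$, giving $(1-\varepsilon)u\le h$ on $D$. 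Taking the supremum over $u$ and then the upper semicontinuous regularization (harmless since $h/(1-\varepsilon)$ is already upper semicontinuous) yields $h^{\star}_{A\cap D,D}\le h/(1-\varepsilon)$, and combining the two bounds gives the claimed identity.

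The main obstacle is the ``$\le$'' direction in (b): one must manufacture a global competitor on all of $X$ out of a function $u$ living only on $D$, and the only available tool is the gluing Proposition \ref{glue}(d). The argument works precisely because the scaling factor $1-\varepsilon$ matches the boundary level of $X_{\varepsilon}$: the bound $u\le 1$ becomes $(1-\varepsilon)u\le 1-\varepsilon$, which is dominated by $h$ on $\partial D$, making the gluing legitimate. Verifying $h|_{A}=0$ from local pluriregularity is the other point requiring care, as it underlies the admissibility checks in every step.
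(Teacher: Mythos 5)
Your proof is correct and follows essentially the same route as the paper, which simply invokes the proof of Proposition 3.2.27 from \cite{J2} with Proposition \ref{glue}(d) substituted for the gluing lemma used there: your argument --- deriving $h^{\star}_{A,X}|_{A}=0$ from local pluriregularity, locating $\partial D$ in $\{h^{\star}_{A,X}\geq 1-\varepsilon\}$, and then performing the two gluing comparisons with the scaling factor $1-\varepsilon$ --- is precisely that proof written out in detail, with Proposition \ref{glue}(d) doing exactly the work the paper assigns to it.
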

\begin{proof}
The proof goes along the same lines as the proof of Proposition 3.2.27 from \cite{J2}. We only need to use Proposition \ref{glue}(d) instead of Proposition 2.3.6 from \cite{J2}.
\end{proof}
\begin{prop}[cf. Proposition 3.2.23 in \cite{J2}]
Let $X_k\nearrow X\s\s Y,$ where $X$ is a Stein space and $Y$ is a complex space for which Josefson's theorem is valid, let $A_k\s X_k,A_k\nearrow A.$ Then $h^{\star}_{A_k,X_k}\searrow h^{\star}_{A,X}.$ 
\label{3.2.23}
\end{prop}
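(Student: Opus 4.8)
The plan is to establish the monotonicity first, so that the limit $h:=\lim_k h^{\star}_{A_k,X_k}$ exists, and then to prove the two inequalities $h\ge h^{\star}_{A,X}$ and $h\le h^{\star}_{A,X}$ separately. For the monotonicity and the lower bound I would only use the definition of the extremal function together with restriction: if $k\le l$ then $X_k\s X_l$ and $A_k\s A_l$, so every competitor for $h_{A_l,X_l}$ restricts to a competitor for $h_{A_k,X_k}$; taking suprema and then upper regularizations (which is monotone and local) gives $h^{\star}_{A_k,X_k}\ge h^{\star}_{A_l,X_l}$ on $X_k$. The same restriction argument applied to $A_k\s A$, $X_k\s X$ yields $h^{\star}_{A_k,X_k}\ge h^{\star}_{A,X}$ on $X_k$, hence $h\ge h^{\star}_{A,X}\ge 0$ on $X$ (the last bound because the zero function competes for $h_{A,X}$, the trivial case $A=\varnothing$ aside). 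Since also $h^{\star}_{A_k,X_k}\le 1$, the decreasing limit $h$ satisfies $0\le h\le 1$; moreover on each $X_j$ the tail $(h^{\star}_{A_k,X_k}|_{X_j})_{k\ge j}$ is a decreasing sequence of plurisubharmonic functions with limit $h|_{X_j}\ge 0\not\equiv-\infty$, so Proposition \ref{glue}(b) shows $h\in\mc{PSH}(X_j)$, and as $j$ is arbitrary $h\in\mc{PSH}(X)$.

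For the reverse inequality I would first locate the set on which $h$ can fail to be $\le 0$ along $A$. By definition $h_{A_k,X_k}\le 0$ on $A_k$, while $h^{\star}_{A_k,X_k}$ differs from $h_{A_k,X_k}$ only on the negligible set $\mc{N}_k:=\{h^{\star}_{A_k,X_k}>h_{A_k,X_k}\}$, which is pluripolar. Setting $\mc{N}:=\bigcup_k\mc{N}_k\in\mc{PLP}(X)$, for $z\in A\setminus\mc{N}$ one has $z\in A_l$ and $h^{\star}_{A_l,X_l}(z)=h_{A_l,X_l}(z)\le 0$ for all large $l$, so $h(z)\le 0$. Thus $h\le 0$ on $A\setminus\mc{N}$.

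The key point is now to remove the pluripolar set $\mc{N}$, and this is exactly where the hypotheses $X\s\s Y$ and the validity of Josefson's theorem on $Y$ enter. Since $\mc{N}$ is locally pluripolar in $Y$, Josefson's theorem provides $\psi_0\in\mc{PSH}(Y)$, $\psi_0\not\equiv-\infty$, with $\mc{N}\s\{\psi_0=-\infty\}$; because $\overline{X}$ is compact, $\psi_0$ is bounded above there, so $\psi:=\psi_0|_X$ may be normalized to satisfy $\psi\in\mc{PSH}(X)$, $\psi\le 0$, $\psi\not\equiv-\infty$ on every component of $X$ (a pluripolar set contains no component), and $\mc{N}\s\{\psi=-\infty\}$. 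Then for each $\varepsilon>0$ the function $h+\varepsilon\psi$ is plurisubharmonic, $\le 1$, and $\le 0$ on $A$ (on $A\setminus\mc{N}$ since $h\le 0$ and $\psi\le 0$, and on $A\cap\mc{N}$ since there $\psi=-\infty$); hence $h+\varepsilon\psi\le h_{A,X}\le h^{\star}_{A,X}$. Letting $\varepsilon\searrow 0$ gives $h\le h^{\star}_{A,X}$ off the pluripolar set $\{\psi=-\infty\}$, and since $h$ and $h^{\star}_{A,X}$ are plurisubharmonic this extends to all of $X$. Combined with the lower bound, $h=h^{\star}_{A,X}$, which is the assertion.

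I expect the genuine difficulty to be concentrated in the last two paragraphs: one has to know that in the complex space setting the negligible sets $\mc{N}_k$ are indeed pluripolar, and then to convert their (local) pluripolarity into a single global plurisubharmonic weight $\psi$. The relative compactness $X\s\s Y$ is what makes the normalization of $\psi$ possible, and Josefson's theorem on $Y$ is precisely the tool that globalizes local pluripolarity; alternatively, once the potential $\psi$ is available one may phrase the removal of $\mc{N}$ through Theorem \ref{alehyane}, whose condition (b) is verified by this very construction, replacing the explicit $\varepsilon$-argument by its conclusion $h^{\star}_{A\setminus\mc{N},X}=h^{\star}_{A,X}$.
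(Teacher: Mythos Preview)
Your argument is correct and is essentially the standard proof of Proposition~3.2.23 in \cite{J2}, which is exactly what the paper invokes; the paper's only remark is that in the Stein space setting one must replace the Bedford--Taylor fact that negligible sets are pluripolar (Corollary~3.2.12 in \cite{J2}) by its analogue for complex spaces, namely Lemma~2.2 in \cite{AH}. You identified this very point as the place where the genuine difficulty sits, so your write-up and the paper's one-line proof are in full agreement; the alternative you mention via Theorem~\ref{alehyane} is also legitimate and slightly cleaner, since it avoids the final ``extend across a pluripolar set'' step.
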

\begin{proof}
The proof is the same as the one of Proposition 3.2.23 in \cite{J2}; only, we use Lemma 2.2 from \cite{AH} instead of Corollary 3.2.12.
\end{proof}
\begin{prop}[cf. Proposition 3.2.15 in \cite{J2}]
Let $Y$ be an irreducible Stein space. Let $X=\Omega_c(\psi)$ with some $c\in\mb{R}$ and $\psi$ as in Theorem \ref{exh} for $Y,$ and let $A\s X$.
Then for any $\varepsilon\in(0,1)$ we have
\begin{displaymath}
\frac{h^{\star}_{A,X}-\varepsilon}{1-\varepsilon}\leq h^{\star}_{\Delta(\varepsilon),X}\leq h^{\star}_{A,X}.
\end{displaymath}
\label{3.2.15}
\end{prop}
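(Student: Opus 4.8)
The plan is to establish the two inequalities separately. For the \emph{lower} bound I would simply exhibit an explicit competitor in the defining family of $h_{\Delta(\varepsilon),X}$, while for the \emph{upper} bound I would combine the monotonicity of the relative extremal function with the fact that enlarging a set by a pluripolar set does not affect its regularized extremal function (Theorem \ref{alehyane}).

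First observe that, since all admissible competitors are bounded above by $1$, the function $h^{\star}_{A,X}$ is plurisubharmonic on $X$ and satisfies $0\le h^{\star}_{A,X}\le 1$ (the constant $0$ being itself a competitor). For the lower estimate I would set
$$w:=\frac{h^{\star}_{A,X}-\varepsilon}{1-\varepsilon}.$$
Then $w\in\mc{PSH}(X)$ as a positive affine image of $h^{\star}_{A,X}$, and $w\le 1$ because $h^{\star}_{A,X}\le 1$. Moreover, on $\Delta(\varepsilon)=\{h^{\star}_{A,X}<\varepsilon\}$ we have $w<0$, so $w$ is one of the functions defining $h_{\Delta(\varepsilon),X}$. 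Hence $w\le h_{\Delta(\varepsilon),X}\le h^{\star}_{\Delta(\varepsilon),X}$, which is precisely the left-hand inequality.

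For the upper estimate I would argue that $A$ is contained in $\Delta(\varepsilon)$ up to a pluripolar set. Indeed, on $A$ we have $h_{A,X}\le 0<\varepsilon$, so every point of $A\setminus\Delta(\varepsilon)$ lies in the negligible set $\{h_{A,X}<h^{\star}_{A,X}\}$, which is pluripolar. Putting $P:=A\setminus\Delta(\varepsilon)$ we obtain $A\s\Delta(\varepsilon)\cup P$, whence, by monotonicity of the extremal function, $h^{\star}_{\Delta(\varepsilon)\cup P,X}\le h^{\star}_{A,X}$. It remains to discard $P$: since $X=\Omega_c(\psi)$ is a Runge (hence Stein) open subset of the irreducible Stein space $Y$ (Proposition \ref{Runge}), condition (b) of Theorem \ref{alehyane} is available for $X$ — every pluripolar subset of the Stein space $X$ is the $-\infty$ locus of a nonconstant nonpositive $u\in\mc{PSH}(X)$ — and therefore $h^{\star}_{\Delta(\varepsilon)\cup P,X}=h^{\star}_{\Delta(\varepsilon),X}$. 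Combining the last two relations gives $h^{\star}_{\Delta(\varepsilon),X}\le h^{\star}_{A,X}$.

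The routine points here are the plurisubharmonicity of $h^{\star}_{A,X}$ (via Choquet's lemma together with Proposition \ref{glue}) and the monotonicity of $h_{\,\cdot\,,X}$, both immediate from the definitions. I expect the only genuine obstacle to be the justification of Theorem \ref{alehyane} on $X$: one must check that its hypotheses really are met, i.e. that every pluripolar subset of the sublevel set $X=\Omega_c(\psi)$ is globally pluripolar (condition (b)), and — should $\Omega_c(\psi)$ fail to be connected — that the theorem may be applied on each connected component, which is itself a domain in the irreducible Stein space $Y$. The pluripolarity of the negligible set $\{h_{A,X}<h^{\star}_{A,X}\}$ on a Stein space, used in the upper estimate, is the other point where one leaves the elementary toolbox.
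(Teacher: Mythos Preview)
Your argument is correct and follows the same route as the paper's (which simply refers to Proposition~3.2.15 in \cite{J2}, replacing two citations by their Stein-space analogues from \cite{AH}): the lower bound by exhibiting $\frac{h^{\star}_{A,X}-\varepsilon}{1-\varepsilon}$ as a competitor, the upper bound by observing that $A\setminus\Delta(\varepsilon)$ is pluripolar and then invoking the stability of $h^{\star}$ under pluripolar perturbations. The only cosmetic difference is that the paper locates the pluripolar defect inside $A\setminus A^{\star}$ (Lemma~2.6 of \cite{AH}) rather than inside the negligible set $\{h_{A,X}<h^{\star}_{A,X}\}$ as you do; since $A^{\star}\subset\{h^{\star}_{A,X}=0\}\subset\Delta(\varepsilon)$, both choices yield the same inclusion $A\subset\Delta(\varepsilon)\cup P$, and the remaining step (Theorem~\ref{alehyane}, fed by Josefson's theorem on the Stein domain $X\subset Y$) is identical.
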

\begin{proof}
The proof is the same as the proof of Proposition 3.2.15 from \cite{J2}. We only need to use Lemma 2.6 and Theorem 2.1 from \cite{AH} instead of Proposition 3.2.2 and Proposition 3.2.11, respectively.
\end{proof}
\begin{prop}[cf. Proposition 4.5.2 in \cite{K1}]
Let $Y$ be an irreducible Stein space. Let $X=\Omega_c(\psi)$ with some $c\in\mb{R}$ and $\psi$ as in Theorem \ref{exh} for $Y,$ and let $A\s X$ be relatively compact. Then, for any point $x_0\in\partial X$ we have$\lim\limits_{X\ni x\rightarrow x_0}h_{A,X}=1.$
\label{4.5.2}
\end{prop}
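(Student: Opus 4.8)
The plan is to split the desired equality into the two one-sided bounds $\limsup_{X\ni x\to x_0} h_{A,X}(x)\le 1$ and $\liminf_{X\ni x\to x_0} h_{A,X}(x)\ge 1$. The first is automatic from Definition \ref{extremal}: every admissible $u$ satisfies $u\le 1$, hence $h_{A,X}\le 1$ everywhere on $X$. The real content is therefore the lower bound, and for it I would produce a single plurisubharmonic barrier $u$ on $X$ that is admissible in the supremum defining $h_{A,X}$ and that already tends to $1$ at every boundary point. The inequality $h_{A,X}\ge u$ then forces $\liminf h_{A,X}\ge \lim u=1$.

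The barrier comes directly from the exhaustion function $\psi$. Since $A\s\s X=\Omega_c(\psi)=\{\psi<c\}$, the closure $\overline{A}$ is a compact subset of $X$; because $\psi$ is real analytic, hence continuous, it attains a maximum $c':=\max_{\overline{A}}\psi<c$ on this compact set. I would then set
$$u:=\frac{\psi-c'}{c-c'}\quad\text{on }X.$$
As an increasing affine function of the plurisubharmonic $\psi$, the function $u$ is plurisubharmonic on $X$, so $u\in\mc{PSH}(X)$. On $A$ we have $\psi\le c'$, whence $u\le 0$; on $X$ we have $\psi<c$, whence $u<1$. Thus $u$ is an admissible competitor and $h_{A,X}\ge u$ on $X$.

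To conclude I would observe that $\partial X\s\{\psi=c\}$: if $X\ni x_n\to x_0\in\partial X$, then $\psi(x_n)<c$ gives $\psi(x_0)\le c$ by continuity, while $x_0\notin X$ gives $\psi(x_0)\ge c$, so $\psi(x_0)=c$. Continuity of $\psi$ then yields, for $X\ni x\to x_0$,
$$u(x)=\frac{\psi(x)-c'}{c-c'}\longrightarrow\frac{c-c'}{c-c'}=1,$$
so $\liminf_{X\ni x\to x_0} h_{A,X}(x)\ge 1$. Combining this with the trivial upper bound gives the claimed limit.

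I do not anticipate a genuine obstacle: the whole argument rests on the single observation that the normalized exhaustion function is itself the barrier, which is precisely the mechanism making sublevel sets of strictly plurisubharmonic exhaustions hyperconvex. The only points requiring a moment of care are the passage to the compact closure $\overline{A}$ in order to extract the strict gap $c'<c$, and the continuity of $\psi$ as a function on the complex space $X$ (supplied by the real analyticity in Theorem \ref{exh}). Notably, neither the strong plurisubharmonicity of $\psi$ nor the irreducibility of $Y$ is actually needed for this particular statement.
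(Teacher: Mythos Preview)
Your proof is correct and follows exactly the approach the paper intends: the paper simply defers to Klimek's Proposition~4.5.2, whose proof is precisely the barrier argument you wrote out---normalize the exhaustion function so that it is $\leq 0$ on $A$ and tends to $1$ at $\partial X$, then use it as a competitor in the supremum defining $h_{A,X}$. Your write-up is a faithful and complete unpacking of what the paper leaves implicit.
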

\begin{proof}
The proof is as the one given in \cite{K1}, since it depends only on the existence of an exhaustion function for $X$.
\end{proof}
\begin{prop}[cf. Proposition 3.2.24 in \cite{J2}]
Let $X$ be a Stein space and let $(K_j)_{j\in\mb{N}}$ be a decreasing sequence of compact subsets of $X$ with $\bigcap\limits_{j\in\mb{N}}K_j=K.$ Then $h_{K_j,X}\nearrow h_{K,X}.$
\label{3.2.24}
\end{prop}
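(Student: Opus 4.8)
The plan is to prove the two inequalities $\lim_j h_{K_j,X}\leq h_{K,X}$ and $h_{K,X}\leq\lim_j h_{K_j,X}$ separately, using only the monotonicity of the variational definition in Definition \ref{extremal} together with a standard compactness fact relating the decreasing compacta $K_j$ to open neighbourhoods of their intersection $K$.

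First I would record the trivial monotonicity. Since $K_j\supseteq K_{j+1}\supseteq K$, enlarging the set shrinks the admissible family $\{u\in\mc{PSH}(X):u\leq 1,\ u|_{(\cdot)}\leq 0\}$, so that $h_{K_j,X}\leq h_{K_{j+1},X}\leq h_{K,X}$ on $X$. Hence the pointwise limit $h:=\lim_{j}h_{K_j,X}$ exists and satisfies $0\leq h\leq h_{K,X}$ (the lower bound because the constant $0$ is always admissible). It therefore remains to prove $h_{K,X}\leq h$.

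For the reverse inequality I would fix an arbitrary admissible function $v$ for the pair $(K,X)$, that is, $v\in\mc{PSH}(X)$ with $v\leq 1$ and $v|_K\leq 0$, together with a number $\varepsilon\in(0,1)$. Since $v$ is upper semicontinuous, the set $U:=\{v<\varepsilon\}$ is open, and $K\subseteq U$ because $v\leq 0<\varepsilon$ on $K$. The key point is a compactness step: the sets $K_j\setminus U$ form a decreasing sequence of compacta with empty intersection (as $\bigcap_j K_j=K\subseteq U$), so by the finite intersection property $K_{j_0}\subseteq U$ for some $j_0$, and then $K_j\subseteq U$ for all $j\geq j_0$. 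For such $j$ the function $w:=\frac{v-\varepsilon}{1-\varepsilon}$ is plurisubharmonic (an increasing affine rescaling of $v$, valid on complex spaces by the definition in Section \ref{sec2}), satisfies $w\leq 1$ since $v\leq 1$, and satisfies $w<0$ on $K_j$ since $v<\varepsilon$ there; hence $w$ is admissible for $(K_j,X)$, giving $w\leq h_{K_j,X}\leq h$. This yields $v\leq(1-\varepsilon)h+\varepsilon$ on $X$, and letting $\varepsilon\to 0^{+}$ gives $v\leq h$. Taking the supremum over all admissible $v$ then gives $h_{K,X}\leq h$, which together with the first step is the assertion.

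I do not expect a genuine obstacle here: the whole argument is topological plus the defining property of $h_{\cdot,X}$ and the stability of plurisubharmonicity under increasing affine rescaling. The only step requiring a little care is the compactness reduction producing $K_j\subseteq U$ for large $j$, and the attendant bookkeeping of the constants in passing to $w$; everything else is formal. (In fact Steinness of $X$ is not used in this particular statement, though it is part of the standing context.)
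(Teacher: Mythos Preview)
Your argument is correct and is precisely the standard proof (the one in \cite{J2}, Proposition 3.2.24, to which the paper simply refers verbatim): monotonicity for one inequality, and for the other the $\varepsilon$-shift $w=(v-\varepsilon)/(1-\varepsilon)$ combined with the compactness step $K_j\subseteq\{v<\varepsilon\}$ for large $j$. Your closing remark is also accurate: Steinness plays no role in this particular proposition.
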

\begin{proof}
The proof may be rewritten verbatim from \cite{J2}.
\end{proof}
\indent The complex Monge-Amp{\`e}re operator $(dd^cu)^n$ for a locally bounded function $u\in\mc{PSH}(X)$ is defined in a standard way on Reg$X$ (\cite{BT1}) and it is extended ``by zero'' through Sing$X$ (for details and the further theory see \cite{B1}).\\
Note that (see \cite{A1}) if $D$ is hyperconvex (i.e. there exists a plurisubharmonic negative function $\eta$ such that for any $c<0$ the set $\{z\in D:\eta(z)<c\}$ is relatively compact in $D$) and $A$ is compact, then
$(dd^ch^{\star}_{A,D})^n=0$ on $D\setminus A.$
\begin{tw}[Comparison theorem, see \cite{B1}]
Let $X$ be a complex space and let $u,v\in\mc{PSH}(X)\cap L^{\infty}_{loc}(X)$ be such that the set $\{u\leq v\}$ is relatively compact in $X.$ Then
\begin{displaymath}
\int_{\{u<v\}}(dd^cu)^n\geq\int_{\{u<v\}}(dd^cv)^n
\end{displaymath}\label{comp}
\end{tw}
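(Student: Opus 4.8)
The plan is to run the Bedford--Taylor argument on the regular part of $X$, exploiting that the Monge--Amp\`ere mass of a locally bounded plurisubharmonic function ignores the (pluripolar, analytic) set $\mathrm{Sing}X$. Since $\{u\leq v\}\s\s X$, I would first fix a relatively compact open set $\Omega$ with $\overline{\{u\leq v\}}\s\Omega\s\s X$. The engine of the proof is the following Stokes-type identity: if $p,q\in\mc{PSH}(\Omega)\cap L^{\infty}_{loc}(\Omega)$ coincide on $\Omega\setminus L$ for some compact $L\s\Omega$, then $\int_{\Omega}(dd^cp)^n=\int_{\Omega}(dd^cq)^n$. On the manifold $\mathrm{Reg}X$ this is the classical integration-by-parts formula applied to the compactly supported function $p-q$, after writing the difference of the two top powers as $dd^c(p-q)$ wedged with a mixed positive current; across $\mathrm{Sing}X$ it rests on the fact that the currents involved carry no mass there, which is precisely the ``extension by zero'' convention and is where the machinery of \cite{B1} enters.

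Next I would apply this identity to $w_{\varepsilon}:=\max\{u,v-\varepsilon\}$ and to $u$, for fixed $\varepsilon\in(0,1)$. Outside $\overline{\{u\leq v\}}$ we have $u>v>v-\varepsilon$, so $w_{\varepsilon}=u$ there; hence $w_{\varepsilon}$ and $u$ agree off a compact subset of $\Omega$ and
$$\int_{\Omega}(dd^cw_{\varepsilon})^n=\int_{\Omega}(dd^cu)^n.$$
Splitting $\Omega$ according to the sign of $u-v+\varepsilon$ and using that $w_{\varepsilon}=u$ on the open set $\{u>v-\varepsilon\}$ while $w_{\varepsilon}=v-\varepsilon$ on the open set $\{u<v-\varepsilon\}$, together with positivity of the Monge--Amp\`ere measure on $\{u=v-\varepsilon\}$, I obtain
$$\int_{\Omega}(dd^cu)^n\geq\int_{\{u>v-\varepsilon\}}(dd^cu)^n+\int_{\{u<v-\varepsilon\}}(dd^cv)^n,$$
and therefore $\int_{\{u\leq v-\varepsilon\}}(dd^cu)^n\geq\int_{\{u<v-\varepsilon\}}(dd^cv)^n$.

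Finally, since $\{u\leq v-\varepsilon\}\s\{u<v\}$, the left-hand side is at most $\int_{\{u<v\}}(dd^cu)^n$, so
$$\int_{\{u<v\}}(dd^cu)^n\geq\int_{\{u<v-\varepsilon\}}(dd^cv)^n.$$
Letting $\varepsilon\searrow 0$ gives $\{u<v-\varepsilon\}\nearrow\{u<v\}$, so monotone convergence for the positive measure $(dd^cv)^n$ pushes the right-hand side up to $\int_{\{u<v\}}(dd^cv)^n$, which is the assertion.

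The main obstacle is the Stokes-type identity on the singular space. The support of $p-q$ sits inside $\overline{\{u\leq v\}}$ and may well meet $\mathrm{Sing}X$, so one cannot simply invoke Stokes on the manifold $\mathrm{Reg}X$ and discard the singular locus; one must genuinely justify integration by parts while controlling any contribution accumulating along $\mathrm{Sing}X$. This reduces to the smooth case because locally bounded plurisubharmonic functions put no Monge--Amp\`ere mass on the lower-dimensional analytic set $\mathrm{Sing}X$, so the relevant currents extend trivially across it. A secondary point is to confirm, via the Bedford--Taylor convergence and quasicontinuity results, that $(dd^cw_{\varepsilon})^n$ really coincides with $(dd^cu)^n$ and with $(dd^cv)^n$ on the open sets where $w_{\varepsilon}$ equals $u$ and $v-\varepsilon$, respectively; both ingredients are furnished by the theory developed in \cite{B1}.
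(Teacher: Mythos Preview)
The paper does not supply a proof of this statement at all: Theorem~\ref{comp} is stated with the attribution ``see \cite{B1}'' and is used as a black box in the proof of Theorem~\ref{domination}. So there is no ``paper's own proof'' to compare against; the intended argument is precisely the one in Bedford's \cite{B1}.

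Your sketch is the standard Bedford--Taylor proof (as in Klimek's monograph, Theorem~3.7.1) transported to a complex space, and it is essentially the argument that \cite{B1} carries out. The logical skeleton---introduce $w_\varepsilon=\max\{u,v-\varepsilon\}$, use the Stokes-type total-mass identity to equate $\int_\Omega(dd^cw_\varepsilon)^n$ with $\int_\Omega(dd^cu)^n$, split according to the open sets $\{u>v-\varepsilon\}$ and $\{u<v-\varepsilon\}$, drop the nonnegative contribution from the contact set, and let $\varepsilon\searrow0$---is correct. You have also correctly isolated the only genuinely new issue in the singular setting: justifying the integration-by-parts identity across $\mathrm{Sing}\,X$. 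As you say, this is handled in \cite{B1} by showing that locally bounded plurisubharmonic functions put no Monge--Amp\`ere mass on proper analytic subsets, so the ``extension by zero'' convention is consistent and the Stokes argument on $\mathrm{Reg}\,X$ suffices. In short, your proposal is correct and coincides with the cited reference; the paper itself contributes nothing further here.
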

\begin{tw}[cf. Theorem 3.2.32 in \cite{J2}, Corollary 3.7.4 in \cite{K1}]
Let $\Omega\s\s D\s\s X,$ where $X$ is a Stein space, $D=\Omega_c(\psi)$ with some $c\in\mb{R}$ and $\psi$ as in Theorem \ref{exh} for $X$, and $\Omega$ is an open set. Let $u,v\in\mc{PSH}(\Omega)\cap L^{\infty}(\Omega)$ such that $(dd^cv)^n\geq(dd^cu)^n$ on $\Omega$ and
$$
\liminf_{\Omega\ni z\rightarrow z_0}(u(z)-v(z))\geq 0, \quad z_0\in\partial\Omega.
$$
Then $u\geq v$ on $\Omega.$
\label{domination}
\end{tw}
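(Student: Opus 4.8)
The plan is to argue by contradiction, using the strongly plurisubharmonic exhaustion $\psi$ furnished by Theorem~\ref{exh} as a perturbation that turns the inequality $(dd^cv)^n\geq(dd^cu)^n$ into a \emph{strict} one, and then to invoke the comparison theorem (Theorem~\ref{comp}) to derive an absurdity. Suppose $u\geq v$ fails on $\Omega$, so there is a point $a\in\Omega$ with $d:=v(a)-u(a)>0$. Since $\overline{\Omega}$ is compact (as $\Omega\s\s D$) and $\psi$ is continuous (being real analytic), the quantity $M:=\sup_{\overline{\Omega}}|\psi|$ is finite. For a small parameter $\epsilon>0$, to be fixed below, and $\eta:=2M\epsilon$, I would introduce the competitor
\[
w:=v+\epsilon\psi-\eta\in\mc{PSH}(\Omega)\cap L^{\infty}(\Omega).
\]

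The role of the two parameters is to control $w$ at the boundary and at $a$ simultaneously. At any $z_0\in\partial\Omega$, continuity of $\psi$ and the hypothesis $\liminf_{\Omega\ni z\to z_0}(u-v)\geq0$ give
\[
\liminf_{\Omega\ni z\to z_0}(u-w)\geq\eta-\epsilon\psi(z_0)\geq M\epsilon>0,
\]
so, by compactness of $\partial\Omega$, the set $\{u\leq w\}$ is relatively compact in $\Omega$; this is precisely the hypothesis needed to apply Theorem~\ref{comp}. At the point $a$ one computes $w(a)-u(a)=d+\epsilon\psi(a)-\eta\geq d-3M\epsilon$, which is positive once $\epsilon<d/(3M)$; hence the open set $\{u<w\}$ is nonempty, as it contains $a$.

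The core of the argument is the interaction of superadditivity of the operator with the comparison theorem. Expanding $(dd^c(v+\epsilon\psi))^n$ into its nonnegative mixed terms, and using that an additive constant does not affect the Monge-Amp\`ere measure, one obtains on $\Omega$
\[
(dd^cw)^n\geq(dd^cv)^n+\epsilon^n(dd^c\psi)^n\geq(dd^cu)^n+\epsilon^n(dd^c\psi)^n.
\]
Applying Theorem~\ref{comp} to the pair $u,w$ (legitimate since $\{u\leq w\}\s\s\Omega$) yields $\int_{\{u<w\}}(dd^cu)^n\geq\int_{\{u<w\}}(dd^cw)^n$, and combining this with the previous display leaves
\[
\epsilon^n\int_{\{u<w\}}(dd^c\psi)^n\leq0.
\]

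I expect the decisive point to be the strict positivity $\int_{\{u<w\}}(dd^c\psi)^n>0$, which is exactly where strong plurisubharmonicity of $\psi$ enters: on $\mathrm{Reg}\,\Omega$ the measure $(dd^c\psi)^n$ has strictly positive density, and since $\{u<w\}$ is a nonempty open subset of $\Omega$ it must meet $\mathrm{Reg}\,\Omega$ (the singular locus being a nowhere dense analytic subset), whence the integral is strictly positive. This contradicts the last display and forces $u\geq v$ on $\Omega$. The only genuinely delicate aspect is thus not a hard estimate but the bookkeeping of $\epsilon$ and $\eta$: one must keep $\{u\leq w\}$ uniformly away from $\partial\Omega$ (to run the comparison theorem) while keeping $a$ inside $\{u<w\}$ (to activate the strictly positive perturbation mass), and the choice $\eta=2M\epsilon$ with $\epsilon<d/(3M)$ achieves both.
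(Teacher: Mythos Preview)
Your argument is correct and is essentially the paper's own proof: perturb $v$ by a small multiple of the strongly plurisubharmonic exhaustion to force a strict Monge--Amp\`ere inequality, arrange that the set $\{u\leq w\}$ is relatively compact in $\Omega$, and obtain a contradiction from the comparison theorem (Theorem~\ref{comp}); the paper simply uses $\psi-c<0$ as its perturbation instead of subtracting a constant by hand. One small imprecision: the set $\{u<w\}$ need not be open, since both $u$ and $w$ are merely upper semicontinuous---what is actually required (and what the paper asserts) is that $\{u<w\}\cap\mathrm{Reg}\,\Omega$ has positive Lebesgue measure, which follows from upper semicontinuity of $u$ at $a$ together with the sub-mean-value property of $w$.
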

\begin{proof}
Observe that $\eta:=\psi-c<0$ is a real analytic strongly plurisubharmonic exhaustion function for $D$. Then there is some $C<0$ satisfying $\overline{\Omega}\s\{\eta<C\}.$ If now $\{u<v\}\neq\varnothing,$ then also $S:=\{u<v+\varepsilon\eta\}$ is nonempty for some $\varepsilon>0.$ Moreover, the set $S\cap\rm{Reg}D$ is of positive Lebesgue measure. Also, $\{u\leq v+\varepsilon\eta\}$ has to be relatively compact in $\Omega.$ Hence we get
$$
\int_S(dd^cu)^n\geq\int_S(dd^c(v+\varepsilon\eta))^n\geq\int_S(dd^cv)^n+\varepsilon^n\int_S(dd^c\eta)^n>\int_S(dd^cv)^n,
$$
a contradiction (note that the first inequality above is the consequence of Theorem \ref{comp}).
\end{proof}
\begin{tw}[cf. Corollary 3.2.33 in \cite{J2}]
Let $X$ be a Stein space, $D=\Omega_c(\psi)$ with some $c\in\mb{R}$ and $\psi$ as in Theorem \ref{exh} for $X$, $K\s\s D$ compact, and let $U\s D\setminus K$ be open. Assume that $h^{\star}_{K,D}$ is continuous and let $u\in\mc{PSH}(U)\cap L^{\infty}(U),u\leq 1$ and such that
$$
\liminf_{U\ni z\rightarrow z_0}(h^{\star}_{K,D}(z)-u(z))\geq 0,\quad z_0\in\partial U\cap D.
$$  
Then $u\leq h^{\star}_{K,D}$ in $U.$
\label{comparison}
\end{tw}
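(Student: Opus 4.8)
The plan is to argue by contradiction, assuming $M:=\sup_U\big(u-h^{\star}_{K,D}\big)>0$, and to reduce the whole statement to a single application of the domination principle (Theorem \ref{domination}) on a suitable relatively compact sublevel set. Write $h:=h^{\star}_{K,D}$ for brevity. First I record the two structural facts I will lean on. Since $D=\Omega_c(\psi)$, the function $\eta:=\psi-c$ is a negative plurisubharmonic exhaustion of $D$, so $D$ is hyperconvex; as $K$ is compact, the remark preceding Theorem \ref{comp} gives $(dd^ch)^n=0$ on $D\setminus K$, and in particular on $U$. On the other hand $(dd^cu)^n\ge0$, so the Monge--Amp\`ere inequality $(dd^cu)^n\ge(dd^c(h+\tau))^n=0$ needed to run the domination principle holds on every open subset of $U$, for every constant $\tau$.

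The crucial point is the behaviour of $u-h$ along $\partial U$. The hypothesis controls it only on $\partial U\cap D$, where $\liminf(h-u)\ge0$ gives $\limsup_{U\ni z\to z_0}(u-h)\le0$. The remaining part $\partial U\cap\partial D$ is handled for free: by Proposition \ref{4.5.2} we have $h_{K,D}\to1$, hence $h\to1$, along $\partial D$, while $u\le1$, so again $\limsup_{U\ni z\to z_0}(u-h)\le1-1=0$ for $z_0\in\partial U\cap\partial D$. Consequently, for each fixed $\tau\in(0,M)$ the superlevel set $S_\tau:=\{z\in U:u(z)-h(z)\ge\tau\}$ (closed in $U$, since $u-h$ is upper semicontinuous) cannot accumulate at any point of $\partial U$; being contained in the compact set $\overline D$, it is therefore relatively compact in $U$, hence in $D$.

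Next I localise. Assuming $K$ nonpluripolar (the pluripolar case giving $h\equiv1\ge u$ trivially), we have $h<1$ on $D$, so $\rho:=\max_{\overline{S_\tau}}h<1$; fixing any $\rho'\in(\rho,1)$ puts $\overline{S_\tau}\s\{h<\rho'\}=:D_{\rho'}$, and since $h\to1$ at $\partial D$ the set $D_{\rho'}$ is relatively compact in $D$. Put $\Omega:=U\cap D_{\rho'}$; this is open with $\Omega\s\s D$ and $\Omega\s U\s D\setminus K$. I then apply the domination principle to the pair $(h+\tau,u)$ on $\Omega$. The Monge--Amp\`ere inequality is in place by the first paragraph, so it remains to check $\liminf_{\Omega\ni z\to z_0}\big((h+\tau)-u\big)\ge0$ on $\partial\Omega$. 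A point $z_0\in\partial\Omega$ lies either in $\partial U\cap D_{\rho'}$, where the hypothesis yields $\liminf(h-u)\ge0$ and hence $\liminf((h+\tau)-u)\ge\tau>0$; or on the level set $\partial D_{\rho'}\s\{h=\rho'\}$, which lies outside $\overline{S_\tau}$, so that $u-h<\tau$ near $z_0$ and $\liminf((h+\tau)-u)\ge0$. (Note $\partial\Omega\s\s D$, so no boundary point lies on $\partial D$.) Theorem \ref{domination} therefore gives $u\le h+\tau$ on $\Omega$, in particular on $S_\tau$.

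This is the sought contradiction: since $\tau<M=\sup_U(u-h)$, there is a point $z^{\ast}\in U$ with $u(z^{\ast})-h(z^{\ast})>\tau$, whence $z^{\ast}\in S_\tau\s\Omega$, contradicting $u\le h+\tau$ on $\Omega$. Hence $M\le0$, i.e. $u\le h^{\star}_{K,D}$ on $U$. I expect the only genuinely delicate step to be the relative-compactness argument for $S_\tau$: the hypothesis says nothing along $\partial U\cap\partial D$, and the whole scheme works precisely because the boundary value $1$ of $h$ there (Proposition \ref{4.5.2}), combined with $u\le1$, supplies the missing one-sided bound and lets me replace the possibly non--relatively compact $U$ by the relatively compact $\Omega$ on which Theorem \ref{domination} is available.
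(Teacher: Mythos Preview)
Your proof is correct and follows essentially the same route as the paper: both arguments rest on $(dd^c h^{\star}_{K,D})^n=0$ on $D\setminus K$, the boundary value $h^{\star}_{K,D}\to 1$ at $\partial D$ from Proposition~\ref{4.5.2} to handle $\partial U\cap\partial D$, and then Theorem~\ref{domination}. The only difference is one of rigor rather than strategy: the paper simply invokes Theorem~\ref{domination} on $U$, tacitly relying on the fact that its proof only needs the set $\{u>h^{\star}_{K,D}\}$ to be relatively compact, whereas you make this explicit by cutting down to $\Omega=U\cap\{h^{\star}_{K,D}<\rho'\}\subset\subset D$ so that the hypothesis $\Omega\subset\subset D$ of Theorem~\ref{domination} is literally satisfied. (A minor remark: your detour through ``$K$ nonpluripolar implies $h<1$ on $D$'' is unnecessary, since on $S_\tau$ one has $h\le u-\tau\le 1-\tau$ directly, giving $\rho\le 1-\tau<1$ by continuity without any assumption on $K$.)
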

\begin{proof}
We know that $(dd^c h^{\star}_{K,D})^n=0$ on $D\setminus K.$ In particular, $(dd^ch^{\star}_{K,D})^n\leq(dd^cu)^n$ in $U.$ Moreover, $\lim_{z\rightarrow z_0}h^{\star}_{K,D}=1, z_0\in\partial D.$ Using Theorem \ref{domination} we get the conclusion.
\end{proof}
\begin{df}[see \cite{Sto},\cite{Ze}]\rm
Let $X$ be an irreducible Stein space. Then $X$ is called \emph{weakly parabolic} if there exists a plurisubharmonic continuous exhaustion function $g:X\rightarrow[0,\infty)$ such that $\log g$ is plurisubharmonic and satisfies $(dd^c\log g)^n=0$ on $X\setminus g^{-1}(0)$. 
\end{df}
\begin{tw}[see Theor{\`e}me 3.16 in \cite{Ze2}]
Let $X$ be an irreducible, locally irreducible weakly parabolic Stein space with some potential $g,$ let $K\s X$ be compact and let $U\s X$ be an open neighborhood of $\hat{K}_X.$ Then there exists a compact, holomorphically convex and locally $L$-regular (see \cite{Ze2}, Definition 3.13) set $E$ with $\hat{K}_X\s E\s U.$
\label{3.16}
\end{tw}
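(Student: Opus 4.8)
The statement being an existence result, my plan is to obtain $E$ as a compact sublevel set $\{u\le 0\}$ of a continuous plurisubharmonic function $u$ on $X$. The advantage of this choice is that holomorphic convexity then comes essentially for free: in a Stein space the holomorphically convex hull coincides with the plurisubharmonic hull, so that if $E=\{u\le 0\}$ is compact and $x_0\in\hat{E}_X$, then $u(x_0)\le\sup_E u\le 0$, whence $x_0\in E$ and $E=\hat{E}_X$. The whole difficulty is thus concentrated in two independent demands: positioning $E$ so that $\hat{K}_X\s\operatorname{int}E$ and $E\s U$, and arranging local $L$-regularity in the sense of \cite{Ze2}, Definition 3.13.

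First I would construct the function realizing the positioning. Since $\hat{K}_X$ is compact and holomorphically convex and $U$ is a neighborhood of it, Stein theory supplies finitely many $f_1,\dots,f_m\in\mc{O}(X)$ and radii $r_j>0$ with $|f_j|<r_j$ on $\hat{K}_X$ whose associated analytic polyhedron is relatively compact and contained in $U$. Combining this with the strongly plurisubharmonic exhaustion $\psi$ of Theorem \ref{exh}, truncated at a level $c$ with $\hat{K}_X\s\{\psi<c\}$, the function
$$u_0:=\max\Big\{\max_{1\le j\le m}\big(\log|f_j|-\log r_j\big),\ \psi-c\Big\}$$
is continuous, plurisubharmonic, strictly negative on a neighborhood of $\hat{K}_X$, and has $\{u_0\le 0\}$ compact and contained in $U$. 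This already yields a holomorphically convex compact set in the required position, and what remains is purely a regularity question.

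The main obstacle is local $L$-regularity, because the boundary of $\{u_0\le 0\}$ may carry no Monge--Amp\`ere mass and the set can be locally pluri-thin there. To cure this I would replace $u_0$ by a smooth strictly plurisubharmonic function: using Demailly's regularized maximum in place of $\max$ near the boundary and adding a small multiple $\varepsilon\psi$, one obtains $u_\varepsilon\in\mc{PSH}(X)$, smooth with $(dd^cu_\varepsilon)^n>0$ on $\operatorname{Reg}X$ near $\partial\{u_\varepsilon\le 0\}$, while $\{u_\varepsilon\le 0\}$ stays sandwiched between $\hat{K}_X$ and $U$ for $\varepsilon$ small. By Sard's theorem I would take the defining level to be a regular value, so that the boundary is smooth on $\operatorname{Reg}X$ and the sublevel set equals the closure of its interior. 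The standard fact that a compact set with smooth boundary (hence non-thin at each of its points) is locally $L$-regular then gives the conclusion on $\operatorname{Reg}X$.

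The genuinely delicate part, and where the hypotheses are really used, is twofold. First, one must match this non-thinness with the precise extremal function of \cite{Ze2}, which is defined relative to the potential $g$; here weak parabolicity and the maximality $(dd^c\log g)^n=0$ off $g^{-1}(0)$ guarantee that $\log g$ is the correct reference weight, placing the whole construction inside the framework in which Definition 3.13 lives. Second, one must control the singular locus: since $X$ is locally irreducible, $\operatorname{Sing}X$ is a pluripolar analytic subset across which the Monge--Amp\`ere operator is extended by zero, and I would argue that a pluripolar boundary portion cannot destroy local $L$-regularity provided every point of $E$ is approached by $\operatorname{int}E$ through $\operatorname{Reg}X$, which the strict plurisubharmonicity of $u_\varepsilon$ secures. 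Reconciling the clean smooth-boundary picture on $\operatorname{Reg}X$ with the behaviour along $\operatorname{Sing}X$ in the exact formalism of \cite{Ze2} is, I expect, the step requiring the most care.
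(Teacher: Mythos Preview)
The paper does not contain a proof of this statement. Theorem~\ref{3.16} is quoted from Zeriahi \cite{Ze2} (Th\'eor\`eme~3.16 there) and is used as a black box; no argument is reproduced. The only indirect information the paper gives about Zeriahi's construction appears later, in Case~2 of Step~6 of the proof of Theorem~\ref{pierscien}, where the author writes ``it suffices to consider $\delta+\varepsilon$ with small $\varepsilon$, instead of $\delta$ in the definition of $E$ in the proof in \cite{Ze2}''. This tells you that in \cite{Ze2} the set $E$ is obtained as a sublevel set $\{\,\cdot\le\delta\}$ of some function attached to the parabolic potential $g$, at a level $\delta$ which can be perturbed continuously --- so your instinct to look for $E$ as a sublevel set is in line with the original, but the function Zeriahi uses is built from $g$ and the associated pluricomplex Green function with pole at infinity, not from an ad hoc polyhedral $u_0$.

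As to your outline itself: it is a plausible strategy in the manifold case, but on a singular space your Step~3 (Sard's theorem and the ``smooth boundary $\Rightarrow$ locally $L$-regular'' heuristic) is not available as stated, and you acknowledge this. More importantly, you never actually use the weak parabolicity hypothesis in a substantive way; you only mention that it ``places the construction in the framework of Definition~3.13''. In Zeriahi's argument the potential $g$ is the source of the needed regularity, not an afterthought: the Green-type function built from $g$ has the Monge--Amp\`ere and continuity properties that force local $L$-regularity of its sublevel sets directly, without any smoothing or Sard argument. Your proposal, as written, is a sketch that does not close this gap, and since the present paper offers no proof of its own, you would need to consult \cite{Ze2} to see how the parabolic structure is genuinely exploited.
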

\section{Proof of the main result}
\label{sec4} 
\begin{proof}[Proof of Theorem \ref{pierscien}] The idea of the proof here is the approximation. First (Steps 1-4) we show that if we know the conclusion holds true for compact sets $A$ (and holomorphically convex, while we consider assumption (A)), then we are able to prove the theorem in its full generality. In Steps 5 and 6 we show that in fact we have the above mentioned property. The proof here is by approximation of $A$ from above by compacta (holomorphically convex, when we work with assumption (A)) with continuous relative extremal functions. The argument however must be more delicate than the one given in \cite{J3}, where such approximation do not require the holomorphic convexity, and additionally, is given just by the $\varepsilon$-envelopes of a set $A$.\\
Fix $0<r<s\leq 1$ and put 
$$\Delta[r]:=\{z\in D:h_{A,D}^{\star}(z)\leq r\},$$
$$L:=h^{\star}_{\Delta(r),\Delta(s)},\quad R:=\max\Big\{0,\frac{h^{\star}_{A,D}-r}{s-r}\Big\}.$$
Observe that $L\geq R.$ Thus we only need to prove the opposite inequality.
\indent\textit{Step 1.} We may assume that $s=1.$\\
The proof of Step 1 is the same for both assumptions, (A) and (B).
Take $0<r<s<1.$ Then $A\cap S$ is nonpluripolar for any connected component $S$ of $\Delta(s),$ and there is $h^{\star}_{A,\Delta(s)}=(1/s)h^{\star}_{A,D}$ on $\Delta(s)$ (this is because of Lemma \ref{sublevel} and the fact that for $D$ as in the assumptions, thanks to Josefson's theorem, we have that for any $P\in\mc{PLP}(D)$ there exists a $u\in\mc{PSH}(D),u\leq 0$ and nonconstant, such that $P\s\{u=-\infty\}$, from which follows that $h^{\star}_{A\cup P,D}=h^{\star}_{A,D}$ for any $A\s D$ and pluripolar set $P$ (see Theorem \ref{alehyane}). Finally, $A\setminus A^{\star}$ is pluripolar - see Lemma 2.6 from \cite{AH}). As a consequence, we get $L=h_{\Delta(r),\Delta(s)}=h_{\{h^{\star}_{A,\Delta(s)}<\frac{r}{s}\},\Delta(s)},R=\max\{0,\frac{h^{\star}_{A,\Delta(s)}-\frac{r}{s}}{1-\frac{r}{s}}\}.$ 
Thus, the problem for the data $(D,A,r,s)$ is done if only it is done for the data $(S,A\cap S,\frac{r}{s},1)$, where $S$ is as above.\\
\indent\textit{Step 2.} Approximation. Let $A_{\nu}\nearrow A, D_{\nu}\nearrow D$, where $A_{\nu}\s D_{\nu}$ is nonpluripolar for each $\nu\in\mb{N}.$ Then, if the conclusion holds true for the data $(D_{\nu},A_{\nu},r,1)$,\ $\nu\in\mb{N},$ then it holds true for $(D,A,r,1),$ as well.\\
Indeed, there is $h^{\star}_{A_{\nu},D_{\nu}}\searrow h^{\star}_{A,D}$ (by virtue of Proposition \ref{3.2.23}). Hence $\{h^{\star}_{A_{\nu},D_{\nu}}<r\}\nearrow\Delta(r)$ and $h^{\star}_{\{h^{\star}_{A_{\nu},D_{\nu}}<r\},D}\searrow h^{\star}_{\Delta(r),D}.$\\
\indent Using Step 1 and Step 2, from now on we assume that $A\s\s D$ and instead of $D$ we consider $\Omega_c(\psi),$ some sublevel set of a real analytic strongly plurisubharmonic exhaustion function of $D$.\\
\indent\textit{Step 3.} Assume that the assumption (B) is satisfied. Then, if the conclusion holds true for all nonpluripolar compact sets $A$, then it holds also for all nonpluripolar sets $A.$\\
Indeed, by Step 2, the conclusion holds for all non-empty open sets $A$. Take a nonpluripolar set $A$. Since the set $\Delta(\varepsilon)$ is open, we have
$$
h^{\star}_{\{h^{\star}_{\Delta(\varepsilon),D}<r\},D}=\max\Big\{0,\frac{h^{\star}_{\Delta(\varepsilon),D}-r}{1-r}\Big\},\quad\varepsilon\in(0,1).
$$
Then $\frac{h^{\star}_{A,D}-\varepsilon}{1-\varepsilon}\leq h^{\star}_{\Delta(\varepsilon),D}\leq h^{\star}_{A,D}$ (because of the Proposition \ref{3.2.15}), from which follows $h^{\star}_{\Delta(\varepsilon),D}\nearrow h^{\star}_{A,D}$ as $\varepsilon\searrow 0.$ Moreover, 
$$
\Big\{h^{\star}_{\Delta(\varepsilon),D}<\frac{r-\varepsilon}{1-\varepsilon}\Big\}\s\Delta(r)\s\{h^{\star}_{\Delta(\varepsilon),D}<r\},\quad\varepsilon\in(0,r), 
$$
which implies
$$
\max\Big\{0,\frac{h^{\star}_{\Delta(\varepsilon),D}-\frac{r-\varepsilon}{1-\varepsilon}}{1-\frac{r-\varepsilon}{1-\varepsilon}}\Big\}\geq h^{\star}_{\Delta(r),D}\geq\max\Big\{0,\frac{h^{\star}_{\Delta(\varepsilon),D}-r}{1-r}\Big\},\quad\varepsilon\in(0,r),
$$
and we get the conclusion as $\varepsilon\searrow 0.$\\
Thus the proof under assumptions of (B) reduces to the case where $A$ is compact.\\
\indent\textit{Step 4.} Assume that assumption (A) is satisfied. Then, if theorem holds true for all nonpluripolar compact and holomorphically convex sets $A,$ then it holds true for all nonpluripolar sets $A$.\\
Take a nonpluripolar set $A$. The set $\Delta(\varepsilon)$ is Runge in $D$ (and in particular it is a Stein space; see Proposition \ref{Runge}), so using approximation by compact holomorphically convex sets we see that the result holds true for the sets $A=\Delta(\varepsilon).$ We finish the proof of Step 4 as in the Step 3.\\
\indent\textit{Step 5.} The case where $A$ is compact and $h^{\star}_{A,D}$ is continuous.\\
The proof is parallel for both assumptions, (A) and (B).
The set $\Delta[r]$ is compact (by virtue of the continuity of $h^{\star}_{A,D}$ and Proposition \ref{4.5.2}). Let $u\in\mc{PSH}(D),u\leq 1,u\leq 0$ on $\Delta[r].$ Put $U:=D\setminus\Delta[r].$ Then for a $z_0\in\partial U$ we obtain
$$
\liminf_{U\ni z\rightarrow z_0}(h^{\star}_{A,D}(z)-(1-r)u(z)-r)\geq 0.
$$
Hence $(1-r)u+r\leq h^{\star}_{A,D}$ in $U$ (see Theorem \ref{comparison}). Thus $h_{\Delta[r],D}\leq R$ and $h^{\star}_{\Delta[r],D}\equiv R$. Finally, considering a sequence of positive numbers $(r_i)_{i\in\mb{N}}$ increasing to $r$ we get $L\equiv R.$\\
\indent\textit{Step 6.} The case where $A$ is compact.\\
First we carry out a construction of a decreasing sequence $(A_j)_{j\in\mb{N}}$ of closed sets containing $A,$ and being a finite unions of closed ``balls''.\\  
Since $D$ is metrizable (for both assumptions, (A) and (B), by virtue of Urysohn's Metrization Theorem), there exists a metric $d$, which gives the topology of $D$.\\
In the case where $D$ is a Stein space take a finite set of charts $(U_i,\varphi_i,B_i,V_i)$,\ $i=1,\ldots, s,$ and corresponding sets $\hat{\mb{B}}(a_i,r_i),$ such that $\hat{\mb{B}}(a_i,r_i)\s\s U_i$ and $\varphi_i:\hat{\mb{B}}(a_i,r_i)\rightarrow B_i\cap\mb{B}(\varphi_i(a_i),r_i)\s\s V_i$ is a biholomorphism, $i=1,\ldots, s$, satisfying $A\s\bigcup\limits_{i=1}^{s}\hat{\mb{B}}(a_i,r_i).$\\
We construct a set $A_1$. Fix an $a\in A.$ Without loss of generality we may assume that $a\in\hat{\mb{B}}(a_1,r_1)\s U_1.$ Take a number $r_a<1$ with $\mb{B}(\varphi_1(a),r_a)\s\mb{B}(\varphi_1(a_1),r_1)$ and small enough so that
$\hat{\mb{B}}(a,r_a)=\varphi^{-1}_1(B_1\cap\mb{B}(\varphi_1(a),r_a))\s\{x\in D:d(x,A)\leq 1\}.$ We may now choose a finite number of sets $\hat{\mb{B}}(a^1_l,r_{a^1_l}),l=1,\ldots,s_1$, with $a^1_l\in A,l=1,\ldots,s_1$,
and such that $A\s\bigcup\limits_{l=1}^{s_1}\hat{\mb{B}}(a^1_l,r_{a^1_l}),$
and define $A_1:=\bigcup\limits_{l=1}^{s_1}\overline{\hat{\mb{B}}(a^1_l,r_{a^1_l})}$.\\
Suppose we have constructed the set $A_j$ for some $j\in\mb{N}.$ Then we obtain  $A_{j+1}$ as follows: take an $a\in A$ and - as before - assume that $a\in\hat{\mb{B}}(a^j_1,r_{a^j_1})\s A_j\cap U_{i_a}$ for some $i_a\in\{1,\ldots,s\}.$ Take a number $r_a<\frac{1}{j+1}$ such that $\mb{B}(\varphi_{i_a}(a),r_a)\s\mb{B}(\varphi_{i_a}(a^j_1),r_{a^j_1})$ and small enough so that $\hat{\mb{B}}(a,r_a)=\varphi^{-1}_{i_a}(B_{i_a}\cap\mb{B}(\varphi_{i_a}(a),r_a))\s\{x\in D:d(x,A)\leq\frac{1}{j+1}\}.$ Choose a finite number of sets $\hat{\mb{B}}(a^{j+1}_m,r_{a^{j+1}_m}),m=1,\ldots,s_{j+1}$, with $a^{j+1}_m\in A,m=1,\ldots,s_{j+1}$, and such that $A\s\bigcup\limits_{m=1}^{s_{j+1}}\hat{\mb{B}}(a^{j+1}_m,r_{a^{j+1}_m}),$
and define $A_{j+1}:=\bigcup\limits_{m=1}^{s_{j+1}}\overline{\hat{\mb{B}}(a^{j+1}_m,r_{a^{j+1}_m})}$.\\ 
Clearly, $(A_j)_{j\in\mb{N}}$ is a decreasing sequence of compact sets being finite unions of closed ``balls'' with $\bigcap\limits_{j=1}^{\infty}A_j=A$.\\
In the subcase where $D$ is a manifold the above construction is carried out with $B_i=V_i.$\\
Two cases have to be considered.\\
\indent\textit{Case 1.} The case where (B) is satisfied.\\
Using Corollary 4.5.9 from \cite{K1} (which is also true for our context and our ``balls'', with a proof which goes along the same lines as in \cite{K1}: we only need to use the approximation of $D$ by strongly pseudoconvex domains and Theorem 10.4 from \cite{Sad} instead of Proposition 4.5.3, and pass to $\mb{C}^n$ by charts) we see that $h_{A_j,D}=h^{\star}_{A_j,D}$ is continuous. Then we have $$h_{\{h_{A_j,D}\leq r\},D}=\max\Big\{0,\frac{h_{A_j,D}-r}{1-r}\Big\}.$$ Also, $h_{A_j,D}\nearrow h_{A,D}$ as $j\nearrow\infty$ (in view of Proposition \ref{3.2.24}). Hence $\{h_{A_j,D}\leq r\}\searrow\{h_{A,D}\leq r\}$ as $j\nearrow\infty.$ Thus $h_{\{h_{A_j,D}\leq r\},D}\nearrow h_{\{h_{A,D}\leq r\},D},$ from which follows $$h_{\{h_{A,D}\leq r\},D}=\max\Big\{0,\frac{h_{A,D}-r}{1-r}\Big\}\leq R.$$ Hence $h^{\star}_{\{h_{A,D}\leq r\},D}\leq R.$ Since the set $\{h_{A,D}\leq r\}\setminus\Delta[r]$ is pluripolar, $h^{\star}_{\Delta[r],D}\leq R$ and, as in Step 5, $L\equiv R.$\\
\indent\textit{Case 2.} The case where (A) is satisfied and $A$ is additionally holomorphically convex.\\
Here we do not know if the relative extremal functions of $A_j$'s are continuous. However, we may once again use the approximation argument to shift the situation to the case of Step 5. It is to do as follows:\\
For any $j\in\mb{N}$ put $U_j:=\bigcup\limits_{m=1}^{s_{j}}\hat{\mb{B}}(a^{j}_m,r_{a^{j}_m})$.
Observe that the sequence $(U_j)_{j\in\mb{N}}$ of open sets is decreasing and enjoys property that for any open set $U$ containing $A$ there is an index $j(U)$ with $U_j\s U$ for all $j\geq j(U).$\\
We use now Theorem \ref{3.16} for $U_j$'s as follows: for $U_1,$ using the same method as in the proof of Theorem \ref{3.16} (given in \cite{Ze2}), we find a compact and holomorphically convex set $E_1$ with continuous relative extremal function and such that $A\s{\rm int}E_1\s E_1\s U_1$ (it suffices to consider $\delta+\varepsilon$ with small $\varepsilon$, instead of $\delta$ in the definition of $E$ in the proof in \cite{Ze2}) Suppose we have found sets $E_1,\ldots, E_j$ for some $j\in \mb{N}.$ In this situation we obtain $E_{j+1}$ using the argument given above for $U_{j+1}\cap{\rm int}E_j$ instead of $U_1.$ We easily see that the decreasing sequence of sets $(E_j)_{j\in\mb{N}}$ gives an approximation of $A$ from above by holomorphically convex compacta with continuous relative extremal functions. It now suffices to use the same argument as in the end of the Case 1.
\end{proof} 
\section{Applications of the main result}
\label{sec5}
In this section we give some applications of our main result. First we need to define the generalized $(N,k)$-crosses in the context of Stein manifolds. 
Let $D_j$ be an $n_j$-dimensional Stein manifold and let $\varnothing\neq A_j\s D_j$ for $j=1,\ldots,N,$ $N\geq 2.$ For $k\in\{1,\ldots,N\}$ let $I(N,k):=\{\alpha=(\alpha_1,\ldots,\alpha_N)\in\{0,1\}^N:|\alpha|=k\},$ where  $|\alpha|:=\alpha_1+\ldots+\alpha_N.$ Put 
\begin{displaymath}
\mathbf{\mc{X}}_{\alpha,j}:=\begin{cases}
D_j, &\text{ if }\alpha_j=1\\
A_j, &\text{ if }\alpha_j=0
\end{cases},\quad \mc{X}_{\alpha}:=\prod_{j=1}^N \mc{X}_{\alpha,j}.
\end{displaymath}
\indent For $\alpha\in I(N,k)$ such that $\alpha_{r_1}=\ldots=\alpha_{r_{k}}=1,\alpha_{i_1}=\ldots=\alpha_{i_{N-k}}=0,$  where $r_1<\ldots<r_k$ and $i_1<\ldots<i_{N-k}$, put
\begin{displaymath}
D_{\alpha}:=\prod_{s=1}^k D_{r_s},\quad A_{\alpha}:=\prod_{s=1}^{N-k} A_{i_s}.
\end{displaymath}
For an $a=(a_1,\ldots,a_N)\in\mc{X}_{\alpha},$ $\alpha$ as above, put $a_{\alpha}^0:=(a_{i_1},\ldots,a_{i_{N-k}})\in A_{\alpha}.$ Analogously, define 
$a_{\alpha}^1:=(a_{r_1},\ldots,a_{r_k})\in D_{\alpha}.$ For every $\alpha\in I(N,k)$ and every $a=(a_{i_1},\ldots,a_{i_{N-k}})\in A_{\alpha}$ define 
\begin{displaymath}
\boldsymbol{i}_{a,\alpha}=(\boldsymbol{i}_{a,\alpha,1},\ldots,\boldsymbol{i}_{a,\alpha,N}):D_{\alpha}\rightarrow\mc{X}_{\alpha},
\end{displaymath}
\begin{displaymath}
\boldsymbol{i}_{a,\alpha,j}(z):=\begin{cases}
z_j, &\text{if }\alpha_j=1\\
a_{j}, &\text{if }\alpha_j=0
\end{cases},\quad j=1,\ldots, N,\quad z=(z_{r_1},\ldots,z_{r_k})\in D_{\alpha}
\end{displaymath}
(if $\alpha_j=0,$ then $j\in\{i_1,\ldots,i_{N-k}\}$ and if $\alpha_j=1,$ then $j\in\{r_1,\ldots,r_{k}\}$).
Similarly, for any $\alpha\in I(N,k)$ and any $b=(b_{r_1},\ldots,b_{r_k})\in D_{\alpha}$ define 
\begin{displaymath}
\boldsymbol{l}_{b,\alpha}=(\boldsymbol{l}_{b,\alpha,1},\ldots,\boldsymbol{l}_{b,\alpha,N}):A_{\alpha}\rightarrow\mc{X}_{\alpha},
\end{displaymath}
\begin{displaymath}
\boldsymbol{l}_{b,\alpha,j}(z):=\begin{cases}
z_j, &\text{if }\alpha_j=0\\
b_{j}, &\text{if }\alpha_j=1
\end{cases},\quad j=1,\ldots, N,\quad z=(z_{i_1},\ldots,z_{i_{N-k}})\in A_{\alpha}. 
\end{displaymath}
\begin{df} (cf. \cite{L1})
For any $\alpha\in I(N,k)$ let $\Sigma_{\alpha}\s A_{\alpha}.$ We define a~\emph{generalized $(N,k)$-cross}
\begin{displaymath}
\mathbf{T}_{N,k}:=\mb{T}_{N,k}((A_j,D_j)_{j=1}^N,(\Sigma_{\alpha})_{\alpha\in I(N,k)})=\bigcup_{\alpha\in I(N,k)}
\{a\in\mc{X}_{\alpha}:a_{\alpha}^0\notin\Sigma_{\alpha}\}
\end{displaymath}
and its \emph{center}
\begin{displaymath}
\mf{C}(\mathbf{T}_{N,k}):=\mathbf{T}_{N,k}\cap(A_1\times\ldots\times A_N).
\end{displaymath}
\end{df} 
It is straightforward that
\begin{displaymath}
\mf{C}(\mathbf{T}_{N,k})=(A_1\times\ldots\times A_N)\setminus\bigcap_{\alpha\in I(N,k)}\{z\in A_1\times\ldots\times A_N:z_{\alpha}^0\in\Sigma_{\alpha}\},
\end{displaymath}
which implies that $\mf{C}(\mathbf{T}_{N,k})$ is non-pluripolar provided that $A_1\times\ldots\times A_N$ is non-pluripolar and at least one of the $\Sigma_{\alpha}$'s is pluripolar (cf. Proposition 2.3.31 from \cite{J2}).
Note that if we take $\Sigma_{\alpha}=\varnothing$ for every $\alpha~\in~I(N,k),$ then in the definition above we get the \textit{$(N,k)$-cross} (see~\cite{J3}) 
\begin{displaymath}
\mathbf{X}_{N,k}=\mb{X}_{N,k}((A_j,D_j)_{j=1}^N):=\mb{T}_{N,k}((A_j,D_j)_{j=1}^N,(\varnothing)_{\alpha\in I(N,k)}). 
\end{displaymath}
\begin{df}[\cite{J3}]
For an $(N,k)$-cross define its \emph{envelope} by
\begin{multline*}
\hat{\mathbf{X}}_{N,k}=\hat{\mb{X}}_{N,k}((A_j,D_j)_{j=1}^N)\\:=\Big\{(z_1,\ldots,z_N)\in D_1\times\ldots\times D_N:\sum_{j=1}^N h^{\star}_{A_j,D_j}(z_j)<k\Big\}.
\end{multline*}
Note the obvious inclusion $\hat{\mathbf{X}}_{N,k-1}\s\hat{\mathbf{X}}_{N,k}$.
\end{df}
As it was already mentioned, using Theorem \ref{pierscien} we may derive a formula for the relatively extremal function of the envelope of $(N,k-1)$-cross with respect to the envelope of $(N,k)$-cross, which will play a fundamental role in the proof of Theorem \ref{main}.
\begin{tw}
Let $D_j$ be a Stein manifold and let $A_j\s D_j$ be locally pluriregular, $j=1,\ldots,N.$ Then
\begin{displaymath}
h^{\star}_{\hat{\mathbf{X}}_{N,k-1},\hat{\mathbf{X}}_{N,k}}(z)=\max\Big\{0,\sum_{j=1}^N h^{\star}_{A_j,D_j}(z_j)-k+1\Big\},\quad z=(z_1,\ldots,z_N)\in\hat{\mathbf{X}}_{N,k}.
\end{displaymath} 
\label{Obwiednia w obwiedni}
\end{tw}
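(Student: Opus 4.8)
The plan is to exhibit both envelopes as sublevel sets of a single relative extremal function and then to quote Theorem \ref{pierscien}. Set $\Phi(z):=\sum_{j=1}^{N}h^{\star}_{A_j,D_j}(z_j)$; this is plurisubharmonic on the Stein manifold $X:=D_1\times\cdots\times D_N$, being a finite sum of pullbacks of plurisubharmonic functions under the projections, and by definition $\hat{\mathbf{X}}_{N,m}=\{z\in X:\Phi(z)<m\}$ for every $m$. Since each $A_j$ is locally pluriregular we have $h^{\star}_{A_j,D_j}=0$ on $A_j$, hence $\Phi\equiv 0$ on the center $C:=A_1\times\cdots\times A_N$, and $C$ is nonpluripolar as a product of nonpluripolar sets. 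One inequality is then immediate: $R:=\max\{0,\Phi-(k-1)\}$ is plurisubharmonic on $\hat{\mathbf{X}}_{N,k}$, is $<1$ there (because $\Phi<k$) and vanishes on $\hat{\mathbf{X}}_{N,k-1}=\{\Phi<k-1\}$, so it competes in the definition of $h_{\hat{\mathbf{X}}_{N,k-1},\hat{\mathbf{X}}_{N,k}}$ and $R\le h^{\star}_{\hat{\mathbf{X}}_{N,k-1},\hat{\mathbf{X}}_{N,k}}$.

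The heart of the matter is the center formula $h^{\star}_{C,\hat{\mathbf{X}}_{N,k}}=\tfrac1k\,\Phi$ on $\hat{\mathbf{X}}_{N,k}$, i.e. the statement that the normalized sum is exactly the relative extremal function of the center in the $(N,k)$-envelope. The inequality $\tfrac1k\Phi\le h^{\star}_{C,\hat{\mathbf{X}}_{N,k}}$ holds because $\tfrac1k\Phi$ is an admissible competitor. For the reverse inequality I would use the product structure of the Monge--Amp\`ere operator: writing $dd^{c}\Phi=\sum_j dd^{c}h^{\star}_{A_j,D_j}$ with the $j$-th term supported in the $z_j$-directions, only the top mixed term $\bigwedge_{j=1}^{N}(dd^{c}h^{\star}_{A_j,D_j})^{n_j}$ survives in $(dd^{c}\Phi)^{n}$ (here $n=\sum_j n_j=\dim X$), and each factor is carried by $\overline{A_j}$; hence $(dd^{c}\Phi)^{n}$ is carried by $C$ and $\Phi$ is maximal off $C$. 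Comparing $\tfrac1k\Phi$ with $h^{\star}_{C,\hat{\mathbf{X}}_{N,k}}$ on $\hat{\mathbf{X}}_{N,k}\setminus C$ through the domination Theorem \ref{domination} --- both functions are maximal there and share the boundary values ($\to 1$ at $\partial\hat{\mathbf{X}}_{N,k}$ and $\to 0$ along $C$) --- then yields $h^{\star}_{C,\hat{\mathbf{X}}_{N,k}}\le\tfrac1k\Phi$.

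Granting the center formula, the theorem drops out of Theorem \ref{pierscien} applied with $A=C$, $D=\hat{\mathbf{X}}_{N,k}$ and $h^{\star}_{A,D}=\tfrac1k\Phi$. The sublevel sets are $\Delta(t)=\{\tfrac1k\Phi<t\}=\hat{\mathbf{X}}_{N,kt}$, so the choice $s=1$, $r=\tfrac{k-1}{k}$ (legitimate for $k\ge 2$) gives $\Delta(s)=\hat{\mathbf{X}}_{N,k}$ and $\Delta(r)=\hat{\mathbf{X}}_{N,k-1}$, whence
\[
h^{\star}_{\hat{\mathbf{X}}_{N,k-1},\hat{\mathbf{X}}_{N,k}}=\max\Big\{0,\frac{\tfrac1k\Phi-\tfrac{k-1}{k}}{1-\tfrac{k-1}{k}}\Big\}=\max\{0,\Phi-(k-1)\},
\]
which is the assertion.

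The principal obstacle is that Theorem \ref{pierscien} and the domination argument for the center formula both demand relative compactness: Theorem \ref{pierscien} requires $D\s\s X$, while $\hat{\mathbf{X}}_{N,k}$ need not be relatively compact in $X$, and the Monge--Amp\`ere comparison wants a hyperconvex ambient set and a compact center. I would resolve both at once by exhaustion. Choosing relatively compact sublevel sets $D_j^{(m)}\nearrow D_j$ of real-analytic strongly plurisubharmonic exhaustions (Theorem \ref{exh}) and putting $A_j^{(m)}:=A_j\cap D_j^{(m)}$, one has $\hat{\mathbf{X}}_{N,k}^{(m)}\s\s X$ with $X$ a product of Stein manifolds (hence a Josefson manifold, so assumption (B) of Theorem \ref{pierscien} applies, each $\hat{\mathbf{X}}_{N,k}^{(m)}$ being Stein by Proposition \ref{Runge}), proves the two identities at level $m$, and lets $m\to\infty$, using Proposition \ref{3.2.23} to pass $h^{\star}_{A_j^{(m)},D_j^{(m)}}\searrow h^{\star}_{A_j,D_j}$, hence $\Phi^{(m)}\searrow\Phi$, together with the corresponding relative extremal functions. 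Checking that all these monotone limits match --- in particular that the sublevel sets converge correctly and that the boundary estimate along $C$ in the domination step survives the limit --- is the delicate point, and is precisely where more care is needed than in the Riemann-domain argument of \cite{J3}.
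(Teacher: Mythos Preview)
Your overall strategy --- exhibit $\hat{\mathbf{X}}_{N,k-1}$ and $\hat{\mathbf{X}}_{N,k}$ as sublevel sets $\Delta(r),\Delta(s)$ of a single relative extremal function and then quote Theorem~\ref{pierscien} --- is the right shape, and the easy inequality $R\le L$ is correctly obtained. The fatal error is the ``center formula'' $h^{\star}_{C,\hat{\mathbf{X}}_{N,k}}=\tfrac1k\Phi$, which is false for $k\ge 2$. Take $k=N$, so that $\hat{\mathbf{X}}_{N,N}=D_1\times\cdots\times D_N$; the product property of the relative extremal function --- precisely the result of \cite{EP1} that the paper invokes --- gives
\[
h^{\star}_{A_1\times\cdots\times A_N,\,D_1\times\cdots\times D_N}(z)=\max_{1\le j\le N}h^{\star}_{A_j,D_j}(z_j),
\]
not $\tfrac1N\sum_j h^{\star}_{A_j,D_j}(z_j)$. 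Your competitor inequality $\tfrac1k\Phi\le h^{\star}_{C,\hat{\mathbf{X}}_{N,k}}$ is fine, but the reverse is simply not true; the sublevel sets of the genuine $h^{\star}_{C,\hat{\mathbf{X}}_{N,N}}$ are the polydisc-type products $\prod_j\{h^{\star}_{A_j,D_j}<r\}$, not the simplicial sets $\hat{\mathbf{X}}_{N,Nr}$, and feeding $A=C$ into Theorem~\ref{pierscien} does not produce $\hat{\mathbf{X}}_{N,k-1}$ as a $\Delta(r)$.

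Your Monge--Amp\`ere/domination argument cannot rescue the formula because the boundary hypothesis of Theorem~\ref{domination} fails. For $k\ge 2$ the set $\partial\hat{\mathbf{X}}_{N,k}$ contains points of $\partial(D_1\times\cdots\times D_N)$ reached with only one coordinate escaping: along a sequence with $z_1\to\partial D_1$ and $z_j\in A_j$ for $j\ge 2$ one has $\tfrac1k\Phi\le\tfrac1k<1$, whereas $h^{\star}_{C,\hat{\mathbf{X}}_{N,k}}$ need not stay below $\tfrac1k$ (for $k=N$ it equals $h^{\star}_{A_1,D_1}(z_1)$, which may tend to $1$). So the claim that $\tfrac1k\Phi$ and $h^{\star}_{C,\hat{\mathbf{X}}_{N,k}}$ ``share the boundary values $\to 1$ at $\partial\hat{\mathbf{X}}_{N,k}$'' is wrong for $\tfrac1k\Phi$, and no exhaustion repairs it. The paper's proof follows \cite{J3} verbatim; the ingredient carrying the hard inequality there is the product property $h^{\star}_{\prod A_j,\prod D_j}=\max_j h^{\star}_{A_j,D_j}$ of \cite{EP1}, used in conjunction with Theorem~\ref{pierscien}, not your averaged version.
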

\begin{proof}
We carry out this proof exactly the same as in \cite{J3}, bearing in mind that the product property for relatively extremal function is true also for domains in Stein manifolds (see \cite{EP1}).
\end{proof}
\begin{df}
We say that a function $f:\mathbf{T}_{N,k}\rightarrow\mb{C}$ is \emph{separately holomorphic on $\mathbf{T}_{N,k}$} if for every $\alpha\in I(N,k)$ and for every $a\in A_{\alpha}\setminus\Sigma_{\alpha}$ the function
\begin{displaymath}
D_{\alpha}\ni z\mapsto f(\boldsymbol{i}_{a,\alpha}(z))
\end{displaymath} is holomorphic. In this case we write $f\in\mc{O}_s(\mathbf{T}_{N,k}).$\\
\indent We denote by $\mc{O}_s^c(\mathbf{T}_{N,k})$ the space of all $f\in\mc{O}_s(\mathbf{T}_{N,k})$ such that for any $\alpha\in I(N,k)$ and for every $b\in D_{\alpha}$ the function
\begin{displaymath}
A_{\alpha}\setminus\Sigma_{\alpha}\ni z\mapsto f(\boldsymbol{l}_{b,\alpha}(z))
\end{displaymath} is continuous.
\end{df}
\begin{tw}
[cf. Theorem 7.1.4 in \cite{J2}]
Let $D_j$ be a Stein manifold, let $A_j\s D_j$ be locally pluriregular, $j=1,\ldots,N$. Let $\Sigma_{\alpha}\s A_{\alpha}$ be pluripolar, $\alpha\in I(N,1)$. Put~$\mathbf{X}_{N,1}:=\mb{X}_{N,1}((A_j,D_j)_{j=1}^N), \mathbf{T}_{N,1}:=\mb{T}_{N,1}((A_j,D_j,\Sigma_j)_{j=1}^N).$ Let $f\in\mc{O}_s^c(\mathbf{X}_{N,1}).$ Then there exists a uniquely determined $\hat{f}\in\mc{O}(\hat{\mathbf{X}}_{N,1})$ such that $\hat{f}=f$ on $\mathbf{T}_{N,1}$ and $\hat{f}(\hat{\mathbf{X}}_{N,1})\s f(\mathbf{T}_{N,1}).$ 
\label{Main cross theorem}
\end{tw}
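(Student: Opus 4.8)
The plan is to follow the scheme of Theorem 7.1.4 in \cite{J2}, replacing each Euclidean ingredient by its Stein counterpart from Section \ref{sec2} and using Theorem \ref{Obwiednia w obwiedni} to identify the relevant envelopes. First I would settle uniqueness: since each $A_j$ is locally pluriregular, the centre $A_1\times\cdots\times A_N$ is non-pluripolar and contained in $\mathbf{T}_{N,1}$, and $\hat{\mathbf{X}}_{N,1}$ is a connected sublevel set of the plurisubharmonic function $\sum_{j}h^{\star}_{A_j,D_j}$; hence two holomorphic extensions agreeing on $\mathbf{T}_{N,1}$ coincide on a non-pluripolar set and so, their difference having pluripolar zero set, agree on all of $\hat{\mathbf{X}}_{N,1}$. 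I would also reduce at once to bounded $f$ by the usual exhaustion of the cross, the envelopes converging by Proposition \ref{3.2.23}.

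The heart is an induction on $N$ reducing the $(N,1)$-cross to two-fold crosses. Fixing $w\in A_N$, the slice $f(\cdot,w)$ is separately holomorphic on the $(N-1,1)$-cross in $D_1\times\cdots\times D_{N-1}$, so by the inductive hypothesis it extends to an $F(\cdot,w)\in\mathcal{O}(\hat{\mathbf{X}}_{N-1,1})$; for $z'$ in the centre $C:=A_1\times\cdots\times A_{N-1}$, separate holomorphicity of $f$ in the last variable gives $F(z',\cdot)\in\mathcal{O}(D_N)$. Thus $F$ is separately holomorphic on the two-fold cross $(\hat{\mathbf{X}}_{N-1,1}\times A_N)\cup(C\times D_N)$; here $\hat{\mathbf{X}}_{N-1,1}$ is itself Stein, being a sublevel set of a plurisubharmonic function (Proposition \ref{Runge}), and $C$ is locally pluriregular. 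The decisive point is that the envelope of this two-fold cross is exactly $\hat{\mathbf{X}}_{N,1}$: by Theorem \ref{Obwiednia w obwiedni} applied to the factors $D_1,\dots,D_{N-1}$ with $k=1$ (the case in which the inner envelope is the centre $C$) one gets $h^{\star}_{C,\hat{\mathbf{X}}_{N-1,1}}(z')=\sum_{i=1}^{N-1}h^{\star}_{A_i,D_i}(z'_i)$ on $\hat{\mathbf{X}}_{N-1,1}$, so the defining inequality $h^{\star}_{C,\hat{\mathbf{X}}_{N-1,1}}+h^{\star}_{A_N,D_N}<1$ becomes $\sum_{j=1}^{N}h^{\star}_{A_j,D_j}<1$.

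It then remains to run the two-fold cross theorem in the present generality (which is also the base case $N=2$). I would argue exactly as in \cite{J2}: after passing to charts and approximating the defining sets from above by holomorphically convex compacta with continuous relative extremal functions (Proposition \ref{3.2.24}, as in Steps 5--6 of the proof of Theorem \ref{pierscien}), the slicewise two-constants estimate $\frac{\log|g|-\log m}{\log M-\log m}\le h^{\star}$ yields plurisubharmonic control of $\log|\hat f|$ by $h^{\star}_{C,\hat{\mathbf{X}}_{N-1,1}}+h^{\star}_{A_N,D_N}$, and the extension is obtained as a locally uniform limit of the slice-extensions; these estimates force holomorphic continuation across the whole envelope and give the range inclusion $\hat f(\hat{\mathbf{X}}_{N,1})\subset f(\mathbf{T}_{N,1})$. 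Throughout, the Euclidean comparison and domination results are replaced by Theorems \ref{comp}, \ref{domination} and \ref{comparison}.

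Finally, the pluripolar singularities $\Sigma_\alpha$ are absorbed using that each $D_j$ is a Stein manifold, hence satisfies Josefson's theorem: by Theorem \ref{alehyane} the relative extremal functions, and therefore the envelope $\hat{\mathbf{X}}_{N,1}$, are unaffected by the pluripolar $\Sigma_\alpha$, while the continuity built into $\mathcal{O}^c_s$ lets one restore $f$ across each $\Sigma_\alpha$ on the individual slices, so the datum on $\mathbf{T}_{N,1}$ may be treated as one on the full cross throughout the induction. I expect the main obstacle to be precisely the envelope identification of the inductive step, namely verifying $h^{\star}_{C,\hat{\mathbf{X}}_{N-1,1}}=\sum_{i<N}h^{\star}_{A_i,D_i}$ on the singular Stein object $\hat{\mathbf{X}}_{N-1,1}$ with the correct boundary behaviour; this is exactly what Theorem \ref{Obwiednia w obwiedni}, and behind it the annulus formula of Theorem \ref{pierscien}, is designed to deliver.
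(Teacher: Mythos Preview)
Your plan is essentially the paper's: the proof here is a one-line deferral to Theorem 7.1.4 of \cite{J2}, and you are reproducing that scheme (induction on $N$, reduction to the two-fold cross, envelope identification) with the Stein-manifold ingredients from Section~\ref{sec2}. One technical slip: you invoke Theorem~\ref{Obwiednia w obwiedni} with $k=1$ to get $h^{\star}_{C,\hat{\mathbf X}_{N-1,1}}=\sum_{i<N}h^{\star}_{A_i,D_i}$, but as stated that theorem concerns $h^{\star}_{\hat{\mathbf X}_{N-1,k-1},\hat{\mathbf X}_{N-1,k}}$, and for $k=1$ the set $\hat{\mathbf X}_{N-1,0}=\{\sum h^{\star}<0\}$ is empty, not the centre $C$. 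The identity you need comes instead directly from the product property \cite{EP1} together with Lemma~\ref{sublevel}, exactly as in \cite{J2}; neither Theorem~\ref{Obwiednia w obwiedni} nor the annulus formula of Theorem~\ref{pierscien} is required for the $(N,1)$ case, so your closing remark that this is ``the main obstacle'' overstates matters here.
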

\begin{proof}
The proof may be rewritten almost verbatim from \cite{J2}.
\end{proof}
\begin{tw}
\label{main}
Let $D_j$ be a Stein manifold and $A_j\s D_j$ be locally pluriregular, $j=1,\ldots, N.$ Take $\Sigma_{\alpha}\s A_{\alpha}$ pluripolar, $\alpha\in I(N,k)$ and put $\mathbf{T}_{N,k}:=\mb{T}_{N,k}((A_j,D_j)_{j=1}^N,(\Sigma_{\alpha})_{\alpha\in I(N,k)}), \mathbf{X}_{N,k}:=\mb{X}_{N,k}((A_j,D_j)_{j=1}^N).$ Then any function $f\in\mc{F}:=\mc{O}_{s}^{c}(\mathbf{T}_{N,k})$ admits a holomorphic extension $\hat{f}\in\mc{O}(\hat{\mathbf{X}}_{N,k})$ such that $\hat{f}=f$ on $\mathbf{T}_{N,k}$ and $\hat{f}(\hat{\mathbf{X}}_{N,k})\s f(\mathbf{T}_{N,k}).$
\end{tw}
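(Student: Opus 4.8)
The plan is to argue by induction on $k$, with the base case $k=1$ furnished verbatim by Theorem \ref{Main cross theorem}. So suppose $k\geq 2$ and that the statement is already established for every $(N,k-1)$-cross built from Stein manifolds, locally pluriregular axes and pluripolar fibre sets. Fix $f\in\mc{O}_s^c(\mathbf{T}_{N,k})$ and abbreviate $u(z):=\sum_{j=1}^N h^{\star}_{A_j,D_j}(z_j)$, so that $\hat{\mathbf{X}}_{N,m}=\{u<m\}$ for every $m$. The inductive step must therefore upgrade holomorphy from the inner sublevel set $\hat{\mathbf{X}}_{N,k-1}=\{u<k-1\}$ to $\hat{\mathbf{X}}_{N,k}=\{u<k\}$.

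First I would manufacture a holomorphic extension of $f$ onto the inner envelope $\hat{\mathbf{X}}_{N,k-1}$. For each $\beta\in I(N,k-1)$ the data of $f$ restrict to the sub-cross $\mathbf{T}_{N,k-1}\s\mathbf{T}_{N,k}$: freezing, in any $k$-dimensional holomorphy slice $\boldsymbol{i}_{a,\alpha}(D_{\alpha})$ of $f$, one of the free coordinates at a point of its axis turns it into a $(k-1)$-dimensional slice, which is still holomorphic and, thanks to $f\in\mc{O}_s^c$, still separately continuous. Checking that these restrictions genuinely assemble into an element of $\mc{O}_s^c(\mathbf{T}_{N,k-1})$ (with suitable pluripolar fibre sets $\Sigma'_{\beta}$) is routine from the definitions, and the induction hypothesis then yields $F\in\mc{O}(\hat{\mathbf{X}}_{N,k-1})$ that agrees with $f$ on the relevant center and satisfies the range bound inherited from the lower level.

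The decisive point is the passage from $F$ on $\hat{\mathbf{X}}_{N,k-1}$ to $\hat{f}$ on $\hat{\mathbf{X}}_{N,k}$, and this is exactly where Theorem \ref{Obwiednia w obwiedni} enters. That theorem gives $h^{\star}_{\hat{\mathbf{X}}_{N,k-1},\hat{\mathbf{X}}_{N,k}}=\max\{0,u-k+1\}$, which is strictly less than $1$ at every point of $\hat{\mathbf{X}}_{N,k}$ because there $u<k$. I would read this as identifying $\hat{\mathbf{X}}_{N,k}$ as the envelope of holomorphy of the two-fold cross whose one axis is $\hat{\mathbf{X}}_{N,k-1}$ (where $F$ already provides holomorphy) and whose transverse directions are the $k$-dimensional holomorphy slices of $f$. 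Applying the $k=1$ extension result (Theorem \ref{Main cross theorem}) to this cross then fuses $F$ and the slice data into a single $\hat{f}\in\mc{O}(\hat{\mathbf{X}}_{N,k})$. Throughout, the pluripolar fibre sets $\Sigma_{\alpha}$ are harmless: by Theorem \ref{alehyane} adjoining a pluripolar set leaves the pertinent relative extremal functions unchanged, so every envelope is computed as though $\Sigma_{\alpha}=\varnothing$.

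It remains to record the two supplementary conclusions. Uniqueness of $\hat{f}$ follows from the identity theorem on the connected set $\hat{\mathbf{X}}_{N,k}$, since $\hat{f}$ and $f$ coincide on the non-pluripolar center; the range inclusion $\hat{f}(\hat{\mathbf{X}}_{N,k})\s f(\mathbf{T}_{N,k})$ propagates from the corresponding inclusions in Theorem \ref{Main cross theorem} and in the induction hypothesis. I expect the main obstacle to be precisely the fusion step of the preceding paragraph: one must verify that the separate holomorphy and separate continuity of $f$ are correctly transmitted to the two-fold cross so that Theorem \ref{Main cross theorem} applies, and, above all, that the extension coming from $F$ on $\hat{\mathbf{X}}_{N,k-1}$ and the extension coming from the slices agree on their overlap — an identity-principle argument along the fibres — so that they truly define a single-valued holomorphic function. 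Once this is in place, the remaining verifications follow the lines of \cite{J3} and \cite{J2}.
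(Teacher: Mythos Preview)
Your overall shape---induction, invoke Theorem \ref{Obwiednia w obwiedni}, reduce to a classical $2$-fold cross and apply Theorem \ref{Main cross theorem}---matches the paper's strategy, but the decisive construction is missing, and without it the argument does not go through. A ``two-fold cross'' in the sense of Theorem \ref{Main cross theorem} lives in a genuine product $E_1\times E_2$; your candidate cross, with ``one axis $\hat{\mathbf{X}}_{N,k-1}$ and transverse directions the $k$-dimensional holomorphy slices of $f$'', has no such product structure, so there is no theorem in the toolkit that accepts it as input. Knowing $h^{\star}_{\hat{\mathbf{X}}_{N,k-1},\hat{\mathbf{X}}_{N,k}}<1$ is not by itself an extension mechanism.

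The paper supplies exactly the missing idea: it runs a \emph{double} induction, first on $N$ and then on $k$, and singles out one coordinate, say $z_N$, to force a product structure $E_1\times E_2$ with $E_1=\hat{\mathbf{Y}}_{N-1,k}\subset D_1\times\cdots\times D_{N-1}$ and $E_2=D_N$. For $z_N\in A_N\setminus Q$ (with $Q$ pluripolar) the induction on $N$ gives $\hat f_{z_N}\in\mc{O}(\hat{\mathbf{Y}}_{N-1,k})$; for $z_N\in D_N$ the induction on $k$ gives $\hat g_{z_N}\in\mc{O}(\hat{\mathbf{Y}}_{N-1,k-1})$. These glue to a function $F$ on the honest $2$-fold cross $\mathbf{Z}=(\hat{\mathbf{Y}}_{N-1,k-1}\times D_N)\cup(\hat{\mathbf{Y}}_{N-1,k}\times(A_N\setminus Q))$, and Theorem \ref{Obwiednia w obwiedni} is used precisely to compute $\hat{\mathbf{Z}}=\hat{\mathbf{X}}_{N,k}$. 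A second nontrivial point your sketch does not address is that $F\in\mc{O}_s(\mathbf{Z})$ is not automatic: holomorphy of $z_N\mapsto F(z',z_N)$ for $z'\in\hat{\mathbf{Y}}_{N-1,k-1}$ requires a further application of the cross theorem after singling out the \emph{first} coordinate instead of the last. Your induction on $k$ alone, without splitting off a variable, cannot produce either of these ingredients.
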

\begin{proof} 
The inclusion $\hat{f}(\hat{\mathbf{X}}_{N,k})\s f(\mathbf{T}_{N,k})$ for $f\in\mc{F}$ is to obtain in a standard way (cf. Lemma 2.1.14 in \cite{J2}; observe it is also true in our context).\\
For each $D_j$ we may find an exhausting sequence of strongly pseudoconvex relatively compact open sets with smooth boundaries (by considering sublevel sets of a smooth strictly plurisubharmonic exhaustion function for each $D_j$).
Thus, it is enough to prove the theorem with additional assumptions that each $D_j$ is strongly pseudoconvex relatively compact open subset (with smooth boundary) of some Stein manifold $\tilde{D}_j$ and $A_j\s\s D_j.$\\
We apply induction over $N.$ There is nothing to prove in the case $N=k.$
Moreover, the case $k=1$ is solved by Theorem \ref{Main cross theorem}. Thus, the conclusion holds true for $N=2.$ Suppose it holds true for $N-1\geq 2.$ Now, we apply induction over $k.$ For $k=1,$ as mentioned, the result is known. Suppose that the conclusion is true for $k-1$ with $2\leq k\leq N-1.$\\
\indent Fix an $f\in\mc{F}.$ Define
\begin{displaymath}
Q:=Q_N=\{z_N\in A_N:\exists \alpha\in I_0(N,k): (\Sigma_{\alpha})_{(\cdot,z_N)}\notin\mc{PLP}\},
\end{displaymath}
where $I_0(N,k):=I(N,k)\cap\{\alpha:\alpha_N=0\}.$ Then $Q\in\mc{PLP}$ (cf. Proposition 2.3.31 from \cite{J2}). For a $z_N\in A_N\setminus Q$ put
\begin{displaymath}
\mathbf{T}_{N-1,k}(z_N):=\mb{T}_{N-1,k}((A_j,D_j)_{j=1}^{N-1},((\Sigma_{(\beta,0)})_{(\cdot,z_N)})_{\beta\in I(N-1,k)}).
\end{displaymath}
Consider also the generalized $(N-1,k-1$-cross
\begin{displaymath}
\mathbf{T}_{N-1,k-1}:=\mb{T}_{N-1,k-1}((A_j,D_j)_{j=1}^{N-1},(\Sigma_{(\beta,1)})_{\beta\in I(N-1,k-1)}).
\end{displaymath}
It can be easily seen that for a fixed $z_N\in A_N\setminus Q$ we have
\begin{displaymath}
(\mathbf{T}_{N,k})_{(\cdot,z_N)}=\mathbf{T}_{N-1,k}(z_N)\cup\mathbf{T}_{N-1,k-1},
\end{displaymath}
where $(\mathbf{T}_{N,k})_{(\cdot,z_N)}$ is the fiber of the set $\mathbf{T}_{N,k}$ over $z_N.$ Define
\begin{displaymath}
\mathbf{Y}_{N-1,k}:=\mb{X}_{N-1,k}((A_j,D_j)_{j=1}^{N-1}),\quad
\mathbf{Y}_{N-1,k-1}:=\mb{X}_{N-1,k-1}((A_j,D_j)_{j=1}^{N-1}).
\end{displaymath}
For any $z_N\in A_N\setminus Q$ we have $f(\cdot,z_N)\in\mc{O}_s^c(\mathbf{T}_{N-1,k}(z_N))$ and, moreover, for any $z_N\in D_N$ we have $f(\cdot,z_N)\in\mc{O}_s^c(\mathbf{T}_{N-1,k-1}).$
Then, by inductive assumption, for any $z_N\in A_N\setminus Q$ there exists an $\hat{f}_{z_N}\in\mc{O}(\hat{\mathbf{Y}}_{N-1,k})$ such that $\hat{f}_{z_N}=f(\cdot,z_N)$ on
$\mathbf{T}_{N-1,k}(z_N).$ Analogously, for any $z_N\in D_N$
there exists a $\hat{g}_{z_N}\in\mc{O}(\hat{\mathbf{Y}}_{N-1,k-1})$ such that $\hat{g}_{z_N}=f(\cdot,z_N)$ on
$\mathbf{T}_{N-1,k-1}.$\\
\indent Define a $2-$fold classical cross (cf. \cite{J6})
\begin{displaymath}
\mathbf{Z}:=\mb{X}_{2,1}((B_j,E_j)_{j=1}^2),
\end{displaymath}
where $B_1=\hat{\mathbf{Y}}_{N-1,k-1},B_2=A_N\setminus Q,E_1=\hat{\mathbf{Y}}_{N-1,k},E_2=D_N.$ Clearly 
\begin{displaymath}
\mathbf{Z}=(\hat{\mathbf{Y}}_{N-1,k-1}\times D_N)\cup(\hat{\mathbf{Y}}_{N-1,k}\times (A_N\setminus Q)).
\end{displaymath}
Applying Lemma \ref{Obwiednia w obwiedni} and pluripolarity of $Q$ we get $\hat{\mathbf Z}=\hat{\mathbf{X}}_{N,k}.$\\
\indent Let $F:\mathbf{Z}\rightarrow \mb{C}$ be given by the formula
\begin{displaymath}
F(z',z_N):=\begin{cases}
\hat{f}_{z_N}(z'), &\text{if } (z',z_N)\in\hat{\mathbf{Y}}_{N-1,k}\times(A_N\setminus Q),\\
\hat{g}_{z_N}(z'), &\text{if } (z',z_N)\in\hat{\mathbf{Y}}_{N-1,k-1}\times D_N.
\end{cases}
\end{displaymath}
\indent First, observe that $F$ is well-defined. Indeed, we only have to check that for any $z_N\in A_N\setminus Q$ we have equality $\hat{f}_{z_N}=\hat{g}_{z_N}$ on $\hat{\mathbf{Y}}_{N-1,k-1}.$ In fact, since both $\hat{f}_{z_N}$ and $\hat{g}_{z_N}$ are extensions of $f(\cdot,z_N),$ we only need to prove existence of some non-pluripolar set $B\s\mathbf{T}_{N-1,k}(z_N)\cap\mathbf{T}_{N-1,k-1}$ and use the identity principle. Observe that the set
\begin{displaymath}
B:=\mf{C}(\mathbf{T}_{N-1,k}(z_N))\cap\mf{C}(\mathbf{T}_{N-1,k-1})
\end{displaymath}
is good for our purpose.\\
\indent Now we prove that $F\in\mc{O}_s(\mathbf{Z}).$
We have to prove that for each $z'\in\hat{\mathbf{Y}}_{N-1,k-1}$ the function $D_N\in z_N\mapsto F(z',z_N)$ is holomorphic (or equivalently, that $F\in\mc{O}(\hat{\mathbf{Y}}_{N-1,k-1}\times D_N)).$ We already know that $F(\cdot,z_N)$ is holomorphic for every $z_N\in D_N.$
To show that $F\in\mc{O}(\hat{\mathbf{Y}}_{N-1,k-1}\times D_N)$ we will use Terada's theorem (or the Cross theorem for manifolds - see \cite{J2}, Theorem 6.2.2). Put
\begin{displaymath}
\mathbf{W}_{N-1,k-1}:=\mb{T}_{N-1,k-1}((A_j,D_j)_{j=2}^N,(\Sigma_{(1,\beta)})_{\beta\in I(N-1,k-1)}),
\end{displaymath}
\begin{displaymath}
\mathbf{Z}_{N-1,k-1}:=\mb{X}_{N-1,k-1}((A_j,D_j)^N_{j=2}).
\end{displaymath}
\indent From the inductive assumption, for any $z_1\in D_1$ there exists an $\hat{h}_{z_1}\in\mc{O}(\hat{\mathbf{Z}}_{N-1,k-1})$ with $\hat{h}_{z_1}=f(z_1,\cdot)$ on $\mathbf{W}_{N-1,k-1}.$ Thus we get
\begin{displaymath}
F(z_1,\ldots,z_N)=f(z_1,\ldots,z_N)=\hat{h}_{z_1}(z_2,\ldots,z_N) 
\end{displaymath} 
for $(z_1,\ldots,z_N)\in(\mathbf{T}_{N-1,k-1}\times D_N)\cap(D_1\times\mathbf{W}_{N-1,k-1}).$ It suffices to show that there exists a non-pluripolar set $C$ such that 
$$C\times D_N\s(\mathbf{T}_{N-1,k-1}\times D_N)\cap(D_1\times\mathbf{W}_{N-1,k-1}).$$ 
It is easy to see that the set with the required properties is
\begin{displaymath}
C:=\mf{C}(\mathbf{T}_{N-1,k-1})\setminus\bigcap_{\alpha\in I(N,k):\alpha_1=\alpha_N=1}\{z\in\prod_{j=1}^{N-1}A_j:
z_{\alpha}^0\in\Sigma_{\alpha}\}.
\end{displaymath}
From the Cross theorem for manifolds we get the existence of a function $\hat{f}\in\mc{O}(\hat{\mathbf{Z}})$ with $\hat{f}=F$ on $\mathbf{Z}.$\\
\indent We have to verify that $\hat{f}=f$ on $\mathbf{T}_{N,k}.$
Take a point $a\in\mathbf{T}_{N,k}.$ The conclusion is obvious if $a\in\mathbf{T}_{N-1,k-1}\times D_N\s\mathbf{Z}.$ Suppose, without losing generality, that $a=~(a_1,\ldots,a_k,a_{k+1},\ldots,a_N)\in D_1\times\ldots\times D_k\times(A_{\alpha}\setminus\Sigma_{\alpha}),$ where $\alpha=(\underbrace{1,\ldots,1}_{k},\underbrace{0,\ldots,0}_{N-k}).$ Observe that we have
\begin{displaymath}
\mc{T}:=\bigcup_{z_N\in A_N\setminus Q}\mathbf{T}_{N-1,k}(z_N)\times\{z_N\}\s\hat{\mathbf{Y}}_{N-1,k}\times(A_N\setminus Q)\s\mathbf{Z}.
\end{displaymath}
We have also
\begin{displaymath}
\mc{T}\s\bigcup_{z_N\in A_N\setminus Q}(\mathbf{T}_{N,k})_{(\cdot,z_N)}\times\{z_N\}\s\mathbf{T}_{N,k}.
\end{displaymath}
\indent Thus, if $b=(b',b_N)\in\mc{T},$ then $\hat{f}(b)=F(b)=\hat{f}_{b_N}(b')=f(b).$ Bearing this in mind, we easily see that it suffices to find a sequence $$(b^{\nu})_{\nu=1}^{\infty}\s\mc{T}\cap\{(a_1,\ldots,a_k)\}\times (A_{\alpha}\setminus\Sigma_{\alpha})$$ 
such that $b^{\nu}\rightarrow a,$ and then continuity of $f(a_1,\ldots,a_k,\cdot)$ will end the proof.\\
\indent Since $Q$ is pluripolar, there exists a sequence $({b_N}^{\nu})$ convergent to $a_N$ such that $({b_N}^{\nu})~\s~ A_N\setminus Q.$ Put $P:=\bigcup\limits_{\nu=1}^{\infty}(\Sigma_{\alpha})_{(\cdot,{b_N}^{\nu})},$ which is a pluripolar set. This guarantees the existence of a sequence $(({b_{k+1}}^{\nu},\ldots,{b_{N-1}}^{\nu}))\s (A_{k+1}\times\ldots\times A_{N-1})\setminus P,$ convergent to $(a_{k+1},\ldots,a_{N-1}).$ Finally we put $b^{\nu}:=(a_{\alpha}^1,{b_{k+1}}^{\nu},\ldots,{b_{N-1}}^{\nu}).$ It is obvious that $b^{\nu}\rightarrow a$ and that for every $\nu\in\mb{N}, b^{\nu}\in\mathbf{T}_{N-1,k}({b_N}^{\nu})\times\{{b_N}^{\nu}\}\s\mc{T}.$
\end{proof}
\indent It is well known that the limit of an increasing sequence of Stein manifolds need not to be Stein (see, for example, \cite{F1}). Observe that our proofs work also for such objects, from which follows that theorems \ref{pierscien} and \ref{Obwiednia w obwiedni} hold true in more general context than Stein manifolds. It is however an open problem whether them hold true for arbitrary complex manifolds or spaces. Also, we do not know whether Theorem \ref{Obwiednia w obwiedni} can be extended to the context of at least Stein spaces.

\end{document}